\theoremstyle{plain}
\newtheorem{corollary}{Corollary}[section]
\newtheorem{dfn}[corollary]{Definition}
\newtheorem{lemma}[corollary]{Lemma}
\newtheorem{thm}[corollary]{Theorem}
\newfont{\sBlackboard}{msbm10 scaled 1200}
\newcommand{\mylabel}[1]{\label{#1}
    \ifx\undefined\stillediting
    \else \fbox{$#1$}\fi }
\newcommand{\BE}{\begin{equation}}
\newcommand{\EEQ}{\end{equation}}
\newcommand{\rfb}[1]{\mbox{\rm
        (\ref{#1})}\ifx\undefined\stillediting\else:\fbox{$#1$}\fi}
\newfont{\Blackboard}{msbm10 scaled 1200}
\newfont{\roma}{cmr10 scaled 1200}
\def\R{{\mathbb R} }
\def\o{\omega}
\def\G{\Gamma}
\newcommand{\bb}{\begin{equation}}
\newcommand{\bbb}{\end{equation}}
\newcommand{\mm}    {{\hbox{\hskip 0.5pt}}}
\newcommand{\bluff} {{\hbox{\raise 15pt \hbox{\mm}}}}
\newcommand{\e}      {{\varepsilon}}
\renewcommand{\l }{\lambda }
\def\G{W^{s,G}(\mathbb{R}^N)}
\newcommand{\ds}{\displaystyle}
\def\section{\@startsection {section}{1}{\z@}{-3.5ex plus -1ex minus
        -.2ex}{2.3ex plus .2ex}{\large\bf}}
\numberwithin{equation}{section}
\begin{document}

\title{The concentration-compactness principle for fractional Orlicz-Sobolev spaces}

\author{Sabri Bahrouni,	Olimpio Miyagaki}

\author{Sabri Bahrouni$^{\mathrm{a}}$\thanks{The author is supported by FAPESP Proc 2023/04515-7, Email: sabri.bahrouni@fsm.rnu.tn},	
		Olimpio Miyagaki$^{\mathrm{b}}$\thanks{
				The author is supported by CNPq Proc 303256/2022-2 and FAPESP Proc 2022/16407-1, Email: olimpio@ufscar.br.}  \\
			{\small $^{\mathrm{a}}$  Department of Mathematics, Faculty of Sciences, University of Monastir, Tunisia}\\	
			{\small $^{\mathrm{b}}$Departamento de Matem\'{a}tica, Universidade Federal de S\~ao Carlos S\~ao Carlos,}\\
            {\small  SP, CEP:13565-905,  Brazil}
}
\date{}

\maketitle

\begin{abstract}
In this paper, we delve into the well-known concentration-compactness principle in fractional Orlicz-Sobolev spaces, and we apply it to establish the existence of a weak solution for a critical elliptic problem involving the fractional $g-$Laplacian.
\end{abstract}

{\small \textbf{Keywords:} Concentration-Compactness Principle; Fractional Orlicz-Sobolev spaces; Variational Methods} \\

{\small \textbf{2010 Mathematics Subject Classification:} 46E30, 35D30, 35R11, 47J30  }

\section{Introduction}

\subsection{An Overview}

The paper by Bonder and Salort \cite{Bonder-Salort1} establishes a connection between fractional-order theories and Orlicz-Sobolev settings. In their work, the authors introduce the concept of fractional order Orlicz-Sobolev space associated with a Young function $G$ and a fractional parameter $0 < s < 1$, denoted as $W^{s,G}(\mathbb{R}^N)$. This space is defined as follows:
$$
W^{s,G}(\R^N)=\bigg{\{}u\in L^{G}(\R^N):\ \int_{\R^N}\int_{\R^N}G\bigg{(}\frac{|u(x)-u(y)|}{|x-y|^{s}}\bigg{)}\frac{dxdy}{|x-y|^{N}}<\infty\bigg{\}}.
$$
Here, $L^G(\mathbb{R}^N)$ is the set of measurable functions $u:\mathbb{R}^N\to \mathbb{R}$ such that $\int_{\mathbb{R}^N} G(|u|) < \infty$.

To describe phenomena in this setting, the appropriate operator is the fractional $g$-Laplacian, which is introduced in \cite{Bonder-Salort1} and defined as:

\begin{equation}\label{op0}
(-\Delta_g)^s u = \text{p.v.} \int_{\mathbb{R}^N} g\left(\frac{|u(x) - u(y)|}{|x - y|^s}\right) \frac{u(x) - u(y)}{|u(x) - u(y)|} \frac{dy}{|x - y|^{N + s}},
\end{equation}

Here, $G$ is a Young function such that $g = G'$, and the operator is defined using the principal value for any fractional parameter $s\in (0,1).$ In the context of the fractional $g$-Laplacian, it encompasses several significant specific instances, notably, the "fractional Laplace operator" (achieved when selecting $G(t)=t^2$) and the "fractional $p$-Laplace operator" (attained when choosing $G(t)=t^p$ for $p>1$). The focus on studying operators with nonlocal and nonstandard growth (the fractional $g$-Laplacian), has grown significantly. Numerous researchers have devoted their attention to establishing fundamental properties of the space $W^{s,G}$, provided  the following structural condition satisfied:
\begin{equation} \label{G1} \tag{$G_1$}
1< p^-\leq \frac{tg(t)}{G(t)} \leq p^+<\infty ,\quad \text{for all }t>0.
\end{equation}
 While it's impractical to cover all the papers in this field, we provide some references, and interested readers can explore further by referencing the papers mentioned \cite{ACPS1,ACPS2,ACPS3,ACPS4,ABS1,ABS2,Sabri1,Sabri2,Sabri3,Sabri4,Bonder-Salort1,NBS1,Salort1}.\\

The issue of lack of compactness has garnered significant attention in the field of fractional and nonlocal elliptic studies. As a result, there has been substantial growth in the relevant literature, with a plethora of significant papers published recently on this subject. If you're looking for an introduction to this class of problems and an extensive list of references, we recommend consulting the recent book \cite{Pucci-G}.\\

Sobolev-type compact embedding theorems are an important tool in proving existence results for wide classes of variational problems. For instance, the classical Rellich-Kondrachov theorem says that the following embedding is compact:
 $$
W^{1,p}(\Omega)\to \begin{cases}
                                 L^{\frac{Np}{N-p}}(\Omega) & \quad 1\leq p<N \\
                                  L^q(\Omega)\ \forall q<\infty & \quad p=N \\
                                  L^\infty(\Omega) & \quad p>N.
\end{cases}
$$
Here the boundedness of $\Omega$ is a fundamental assumption. For instance, unlike bounded domains, no compact embedding is available for $W^{1,p}(\R^N)$. In this context, in \cite{Sabri2}, we established
a compact embedding theorems for the fractional Orlicz-Sobolev space were Orlicz spaces are in target. More precisely, for any Young function $B$ such that $G_{*}$ is essentially stronger than $B$, denoted $B\prec\prec G_{*}$, we proved the compact embedding of $W^{s,G}(\Omega)$ into  $L^{B}(\Omega)$, where the critical function $G_*$ is given by
$$
 G_{*}^{-1}(t)=\displaystyle\int_{0}^{t}\frac{G^{-1}(\tau)}{\tau^{\frac{N+s}{N}}}d\tau.
$$

An enhanced embedding was addressed in \cite{ACPS1}, where it furnishes us with the ideal Orlicz target space within the Sobolev embedding framework for space $W^{s,G}(\R^N)$. This optimal space is constructed based on the Young function $G_{\frac{N}{s}}$, which is associated with the parameters $G, N,$ and $s$ in the following manner. Let $s \in(0,1)$ and let $G$ be a Young function such that
\begin{equation}\label{G2}\tag{$G_2$}
  \int^{\infty}\left(\frac{t}{G(t)}\right)^{\frac{s}{N-s}} d t=\infty,
\end{equation}
and
\begin{equation}\label{G3}\tag{$G_3$}
  \int_0\left(\frac{t}{G(t)}\right)^{\frac{s}{N-s}} d t<\infty.
\end{equation}
Then, $G_{\frac{N}{s}}$ is given by
\begin{equation}\label{Critical-function}
  G_{\frac{N}{s}}(t)=G\left(H^{-1}(t)\right) \text { for } t \geq 0,
\end{equation}
where the function $H:[0, \infty) \rightarrow[0, \infty)$ obeys
$$
H(t)=\left(\int_0^t\left(\frac{\tau}{G(\tau)}\right)^{\frac{s}{N-s}} d \tau\right)^{\frac{N-s}{s}} \text { for } t \geq 0 .
$$

Under the conditions \eqref{G2} and \eqref{G3}, we have the embedding
$$
W^{s,G}(\R^N)\hookrightarrow L^{G_{\frac{N}{s}}}(\R^N),
$$
and there exists a constant $C=C(N, s)$ such that
\begin{equation}\label{Inequality-embedding}
  \|u\|_{G_{\frac{N}{s}}} \leq C\|u\|_{s,G},
\end{equation}
for every function $u \in W^{s,G}(\R^N)$. Moreover, $L^{G_{\frac{N}{s}}}\left(\mathbb{R}^N\right)$ is the oprimal target space in inequality  among all Orlicz spaces. In particular, \eqref{Inequality-embedding} yields
\begin{equation}\label{Sobolev-constant}
  S:=\inf_{u\in W^{s,G}(\R^N)\backslash\{0\}}\frac{\|u\|_{s,G}}{\|u\|_{G_{\frac{N}{s}}}}>0.
\end{equation}

In the recent paper \cite{Sabri3}, the first author studied the existence of solutions of a Quasilinear elliptic problem in the whole space  $\mathbb{R}^N,$ $N$ an integer $N\geq 2$, implying the lack of compactness. More precisely, we considered the following nonlinear fractional $g-$Laplacian elliptic problem
\begin{align} \label{m.equation}
\begin{cases}
(-\Delta_g)^s u +g(u)\frac{u}{|u|}=f(u)&\text{ in } \mathbb{R}^N\\
u\in W^{s,G}(\mathbb{R}^N).
\end{cases}
\end{align}

As we mentioned before, the unboundedness of the domain generally prevents the study of these types of problems by general methods of nonlinear analysis due to the lack of compactness.
It has been observed that by restricting the study to sub-spaces formed by functions having some symmetries of the problem, some forms of compactness were obtained.
Denote by
$$
W^{s,G}_{rad}(\mathbb{R}^{N})=\{u\in W^{s,G}(\mathbb{R}^{N}):\ u\ \text{is radially symmetric}\}.
$$

However, when $F\sim G_{\frac{N}{s}}$ in \eqref{m.equation} where $F^{\prime}=f$ (critical type problems), the existence problem becomes much more delicate. In the case $s = 1$ and $G(t) = t^2$ we recover the famous Yamab\'{e} equation appearing in Riemannian geometry and studied by Aubin \cite{Aubin} and then by Br\'{e}zis-Nirenberg \cite{BN}.
What is remarkable is that, generally, these are almost the only two ways to lose compactness ({\it the unboundedness of the domain $\&$ the critical type problems}).

One highly significant tool for addressing critical issues was formulated by P.L. Lions in his renowned work cited as \cite{Lions1}. P.L. Lions introduced the concentration-compactness principle (referred to as CCP for brevity), which involves the examination of compactness limitations in bounded sequences within the function space $W^{1,p}(\Omega)$.

To keep the introduction concise and straightforward, we will not provide a comprehensive explanation of this principle. However, interested readers can refer to the following papers for more detailed information \cite{Bonder1, Bonder2, Bonder3, Ho-Kim, Fu1}.

\subsection{Main results}

We define the fractional $(s, G)$-gradient of a function $v\in W^{s,G}(\R^N)$ as
\begin{equation}\label{gradient}
 \mathcal{D}^s v(x)=G( |v|)+\int_{\R^N}^{} G( |D^s v|) \frac{dy}{|x-y|^N}.
\end{equation}

The main result of this paper reads:

\begin{thm}\label{CCP}
Let $s\in(0,1)$. Assume that $G$ is a Young function satisfying conditions \eqref{G1}-\eqref{G3}, and let $G_{\frac{N}{s}}$ be the Young function defined as in \eqref{Critical-function}.

Let $(u_n)_{n\in \mathbb{N}}$ be a bounded sequence in $W^{s,G} (\R^N)$ such that
$$
\begin{aligned}
   & u_n \rightharpoonup u\quad \text{in}\quad W^{s,G} (\R^N), \\
   & \mathcal{D}^s u_n \overset{\ast}{\rightharpoonup}\mu\quad \text{ in }\quad \mathcal{R}(\R^N),\\
   &G_{\frac{N}{s}} (|u_n|) \overset{\ast}{\rightharpoonup} \nu \text{ in }\mathcal{R}(\R^N),
   \end{aligned}
$$
where $\mathcal{R}(\R^N)$ is the space of all signed finite Radon measures on $\R^N.$

Then, there exist positive numbers $(\nu_i)_{i\in I}$ and $(\mu_i)_{i\in I}$, where $I$ is countable index set, such that
\begin{equation}\label{CCP1}\tag{CCP1}
  \nu = G_{\frac{N}{s}} (|u|) + \sum_{i\in I}^{} \nu_i \delta_{x_i},
\end{equation}
\begin{equation}\label{CCP2}\tag{CCP2}
  \mu \geq \mathcal{D}^s u+ \sum_{i\in I}^{} \mu_i \delta_{x_i},
\end{equation}
\begin{equation}\label{CCP3}\tag{CCP3}
  S \min \left\{ \nu_i^{\frac{1}{p_\ast^-}}, \ \nu_i^{\frac{1}{p_\ast^+}} \right\} \leq \max \left\{ \mu_i, \ \mu_i^{\frac{p^-}{p^+}}, \ \mu_i^{\frac{p^+}{p^-}} \right\},
\end{equation}
where $S$ defined in \eqref{Sobolev-constant} and $p_*^{\pm}=\displaystyle\frac{Np^\pm}{N-sp^\pm}$.
Moreover, if we define
$$
\begin{aligned}
   & \nu _\infty= \lim_{R \rightarrow+\infty} \limsup_{n \rightarrow \infty} \int_{|x|>R} G_{\frac{N}{s}} (|u_n|)  dx, \\
   & \mu_\infty = \lim_{R \rightarrow+\infty} \limsup_{n \rightarrow \infty} \int_{|x|>R} \mathcal{D}^s u_n dx.\\
\end{aligned} $$
Then,
\begin{equation}\label{CCP4}\tag{CCP4}
  \limsup_{n \rightarrow \infty} \int_{\R^N}^{} G_{\frac{N}{s}} (|u_n|)  dx=\nu (\R^N)+\nu_\infty,
\end{equation}
\begin{equation}\label{CCP5}\tag{CCP5}
\limsup_{n \rightarrow \infty} \int_{\R^N}^{}  \mathcal{D}^s u_n dx=\mu(\R^N) +\mu_\infty,
\end{equation}
\begin{equation}\label{CCP6}\tag{CCP6}
 S \min \left\{ (\nu_\infty)^{\frac{1}{p_\ast^-}}, \ (\nu_\infty)^{\frac{1}{p_\ast^+}}  \right\} \leq \max \left\{ \mu_\infty, (\mu_\infty)^{\frac{p^-}{p^+}}, \ (\mu_\infty)^{\frac{p^+}{p^-}} \right\}.\end{equation}
\end{thm}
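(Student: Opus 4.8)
The plan is to follow the classical Lions blueprint, adapted to the Orlicz setting through the reverse H\"older / Sobolev inequality \eqref{Inequality-embedding}, the homogeneity-breaking being handled exactly by the $\min/\max$ over the exponents $p^\pm$, $p^\pm_*$ coming from \eqref{G1}. First I would reduce to the case $u=0$: set $v_n=u_n-u$, observe that boundedness in $W^{s,G}$ is preserved, that $v_n\rightharpoonup 0$, and — using \eqref{G1} together with the Brezis--Lieb-type splitting available for Young functions satisfying the $\Delta_2$ condition encoded in \eqref{G1} — that $\mathcal D^s v_n\overset\ast\rightharpoonup \tilde\mu:=\mu-\mathcal D^s u\ge 0$ and $G_{N/s}(|v_n|)\overset\ast\rightharpoonup \tilde\nu:=\nu-G_{N/s}(|u|)$. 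The point is that the nonlinear maps $u\mapsto \mathcal D^s u$ and $u\mapsto G_{N/s}(|u|)$ are continuous on $W^{s,G}$ and the weak-$\ast$ limits of the ``cross terms'' vanish; this is where I expect to lean on a localized version of the compact embedding of \cite{Sabri2} (Orlicz target on bounded domains) to kill the non-diagonal contributions on compact sets. Once $u=0$, \eqref{CCP1} becomes the statement that $\tilde\nu$ is purely atomic, and \eqref{CCP2} becomes $\tilde\mu\ge\sum_i\mu_i\delta_{x_i}$.

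The heart of the matter is the \emph{localized Sobolev inequality}: for $\varphi\in C_c^\infty(\R^N)$ I want
$$
S\,\min\!\left\{\Big(\textstyle\int|\varphi|^{p^-_*}\,d\tilde\nu\Big)^{1/p^-_*},\ \Big(\textstyle\int|\varphi|^{p^+_*}\,d\tilde\nu\Big)^{1/p^+_*}\right\}
\ \le\ \max\!\left\{\Big(\textstyle\int|\varphi|^{p}\,d\tilde\mu\Big)^{?},\ \dots\right\},
$$
obtained by writing \eqref{Inequality-embedding} for $\varphi v_n$, passing to the limit on the left via $G_{N/s}(|v_n|)\overset\ast\rightharpoonup\tilde\nu$, and on the right by controlling the fractional $(s,G)$-gradient of the product $\varphi v_n$ in terms of $\varphi\,\mathcal D^s v_n$ plus an error that vanishes because $v_n\to 0$ strongly in $L^G_{loc}$ (again the local compact embedding) and $\varphi$ is Lipschitz. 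The Orlicz norm is not homogeneous, so at this step I replace norms by modulars and use \eqref{G1}: $G(\lambda t)$ is squeezed between $\lambda^{p^-}G(t)$ and $\lambda^{p^+}G(t)$ for $\lambda\ge 1$ and the reverse for $\lambda\le1$, which is precisely what produces the $\max\{\mu_i,\mu_i^{p^-/p^+},\mu_i^{p^+/p^-}\}$ on the right and the $\min\{\nu_i^{1/p^-_*},\nu_i^{1/p^+_*}\}$ on the left; similarly the critical Young function $G_{N/s}$ inherits the growth exponents $p^\pm_*$ via \eqref{Critical-function}. Feeding into this localized inequality the test functions $\varphi$ concentrated near a point $x_0$, a standard argument (as in Lions, or in its Orlicz-space incarnations in \cite{Bonder1,Fu1}) shows first that the set of atoms of $\tilde\nu$ is contained in the set of atoms of $\tilde\mu$, hence countable; calling these atoms $\{x_i\}_{i\in I}$ with masses $\nu_i=\tilde\nu(\{x_i\})$, $\mu_i=\tilde\mu(\{x_i\})$, the same inequality applied to $\varphi_\varepsilon\to\mathbf 1_{\{x_i\}}$ gives \eqref{CCP3}, and a measure-theoretic lemma (a non-atomic part cannot be squeezed by the same inequality without being zero, using that $\tilde\mu$ has no $L^1$-density obstruction) upgrades $\tilde\nu$ to be purely atomic, i.e.\ \eqref{CCP1}; \eqref{CCP2} then follows by restricting $\tilde\mu$ to $\{x_i\}$.

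For the behavior at infinity I would run the same machinery with cut-offs $\psi_R$ supported in $\{|x|>R\}$: \eqref{CCP4} and \eqref{CCP5} are the ``no mass escapes except to infinity'' bookkeeping identities, obtained by splitting $\int_{\R^N}=\int_{|x|<R}+\int_{|x|>R}$, using the weak-$\ast$ convergence on the ball together with $\mu(\partial B_R)=\nu(\partial B_R)=0$ for a.e.\ $R$, and letting $R\to\infty$; \eqref{CCP6} is \eqref{CCP3} ``at infinity'', gotten by testing \eqref{Inequality-embedding} against $\psi_R v_n$ and passing to the limit first in $n$ and then in $R$, with the same modular estimates from \eqref{G1} supplying the $\min/\max$ structure. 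The main obstacle, and the step deserving the most care, is the product estimate for the fractional $(s,G)$-gradient: controlling $\int\!\!\int G\big(|(\varphi v_n)(x)-(\varphi v_n)(y)|/|x-y|^s\big)|x-y|^{-N}\,dxdy$ by $\int \varphi^{?}\,d\mathcal D^s v_n$ up to a vanishing error requires a careful commutator-type bound splitting $\varphi(x)v_n(x)-\varphi(y)v_n(y)=\varphi(x)(v_n(x)-v_n(y))+v_n(y)(\varphi(x)-\varphi(y))$, estimating the second term using Lipschitz continuity of $\varphi$, the convexity of $G$, \eqref{G1}, and strong $L^G_{loc}$ convergence $v_n\to0$ — the nonlocality means this is genuinely more delicate than in the local case \cite{Lions1} and is where the bulk of the technical work will lie.
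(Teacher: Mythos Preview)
Your proposal is correct and follows essentially the same route as the paper: reduction to $u=0$ via a Brezis--Lieb splitting, the localized Sobolev inequality $S\|\varphi w_n\|_{G_{N/s}}\le\|\varphi w_n\|_{s,G}$ combined with the commutator decomposition $D^s(\varphi w_n)=\varphi(x)D^s w_n+w_n(y)D^s\varphi$ and strong $L^G_{loc}$ convergence to kill the error, the $(p^\pm,p^\pm_*)$-growth bounds from \eqref{G1} to produce the $\min/\max$ structure, and the same cut-off arguments at the atoms $x_i$ and at infinity for \eqref{CCP3}--\eqref{CCP6}. The only implementation detail you may want to anticipate is that the paper handles the passage to the limit on the $\nu$-side not by a direct modular bound but through the Matuszewska--Orlicz function $M_{N/s}$ associated with $G_{N/s}$, invoking \cite[Lemmas~4.4 and~4.7]{Bonder3} to obtain the reverse-H\"older inequality $S\|\phi\|_{M_{N/s},d\bar\nu}\le\|\phi\|_{G_\infty,d\bar\mu}$ from which atomicity of $\bar\nu$ follows; this is precisely the mechanism behind your ``$G_{N/s}$ inherits the growth exponents $p^\pm_*$'' remark.
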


As an application, we shall consider the following nonlinear fractional $g-$Laplacian elliptic problem
\begin{align} \label{m.equation}\tag{$\Sigma$}
\begin{cases}
(-\Delta_g)^s u +g(u)\frac{u}{|u|}=g_*(u)\frac{u}{|u|}+\lambda f(u)&\text{ in } \mathbb{R}^N\\
u\in W^{s,G}(\mathbb{R}^N),
\end{cases}
\end{align}
where $\lambda>0$ is a real parameter and $g_*$ is such that
\begin{equation}\label{g*}
  G_{\frac{N}{s}}(t)=\int_{0}^{t}g_*(\tau)d\tau.
\end{equation}
The nonlinearity $f\colon\mathbb{R}\rightarrow\mathbb{R}$ assumed to be continuous
\begin{equation}\label{f1}
  \lim_{t\to 0}\frac{f(t)}{g(t)}=0,
\end{equation}
\begin{equation}\label{f2}
\limsup_{t\to +\infty}\frac{|f(t)|}{m(|t|)}<+\infty,
\end{equation}
where $m\colon (0,+\infty)\to \mathbb{R}$ is continuous function satisfying:
\begin{equation}\label{m1}
0<m^-\leq\frac{tm(t)}{M(t)}\leq m^+,\quad\text{for all } t>0,
\end{equation}
 where $p^+<m^-<m^+<p^-_*:=\frac{Np^-}{N-sp^-}$ and $M(t)=\int^t_0m(s)\ ds$ is a Young function.

There is $\theta>p^+,$ such that
\begin{equation}\label{f3}
0<\theta F(t)\leq f(t)t,\quad\text{for all }t\in\mathbb{R}\backslash\{0\},
\end{equation}
where
\begin{equation}\label{primitive}
    F(t)=\int_{0}^{t}f(\tau)\,d\tau.
\end{equation}

Our existence result for Problem \eqref{m.equation} is the following

\begin{thm}\label{Existence}
Assume that $G$ is a Young function satisfying conditions \eqref{G1}-\eqref{G3}. Suppose that \eqref{f1}-\eqref{f3} hold. Then, there exists $\lambda_*>0,$ such that for $\l>\l_*$ problem \eqref{m.equation} has at least a nontrivial weak solution.
\end{thm}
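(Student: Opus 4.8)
The plan is to obtain the nontrivial weak solution as a mountain pass critical point of the energy functional associated with \eqref{m.equation}, using the concentration-compactness principle of Theorem \ref{CCP} to recover the compactness lost at the critical level. Define, on $X:=W^{s,G}(\mathbb{R}^N)$,
\begin{equation*}
J_\lambda(u)=\int_{\mathbb{R}^N}\int_{\mathbb{R}^N}G\!\left(\frac{|u(x)-u(y)|}{|x-y|^s}\right)\frac{dx\,dy}{|x-y|^N}+\int_{\mathbb{R}^N}G(|u|)\,dx-\int_{\mathbb{R}^N}G_{\frac{N}{s}}(|u|)\,dx-\lambda\int_{\mathbb{R}^N}F(u)\,dx.
\end{equation*}
First I would verify that $J_\lambda\in C^1(X,\mathbb{R})$ and that it has the mountain pass geometry: condition \eqref{f1} together with the embedding \eqref{Inequality-embedding} and the equivalence of modular and norm convergence (via \eqref{G1}) gives $J_\lambda(u)\geq \alpha>0$ on a small sphere $\|u\|_{s,G}=\rho$, uniformly for $\lambda$ in bounded sets; and \eqref{f3} (the Ambrosetti--Rabinowitz condition with $\theta>p^+$) together with $G_{\frac{N}{s}}$ having growth $\geq p_*^-$ forces $J_\lambda(te)\to-\infty$ for any fixed $e\neq 0$. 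Let $c_\lambda$ denote the resulting mountain pass level and $(u_n)$ a Palais--Smale sequence at that level.

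Next I would show $(u_n)$ is bounded in $X$. This is the standard computation $J_\lambda(u_n)-\frac{1}{\theta}\langle J_\lambda'(u_n),u_n\rangle$: the $G$-terms contribute a positive multiple of the modular because $tg(t)/G(t)\leq p^+<\theta$ by \eqref{G1}, the $F$-term is absorbed by \eqref{f3}, and the critical term also has the right sign since $\frac{t g_*(t)}{G_{\frac{N}{s}}(t)}\geq p_*^- >\theta$ would need care — in fact one uses that $\left(\tfrac1\theta-\tfrac{1}{p_*^-}\right)\int G_{\frac{N}{s}}(|u_n|)$ appears with favorable sign — yielding a bound on the modular of $u_n$ and hence on $\|u_n\|_{s,G}$. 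Passing to a subsequence we may assume $u_n\rightharpoonup u$ in $X$, $\mathcal{D}^s u_n\overset{\ast}{\rightharpoonup}\mu$, $G_{\frac{N}{s}}(|u_n|)\overset{\ast}{\rightharpoonup}\nu$ in $\mathcal{R}(\mathbb{R}^N)$, so Theorem \ref{CCP} applies and produces the at most countable family $\{x_i\}_{i\in I}$ with weights $\nu_i,\mu_i$ obeying \eqref{CCP1}--\eqref{CCP6}.

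The heart of the argument is to rule out concentration, i.e.\ to prove $I=\emptyset$ and $\nu_\infty=\mu_\infty=0$, for $\lambda$ large. Fix $i\in I$, take a cutoff $\phi_{i,\varepsilon}$ equal to $1$ near $x_i$ and supported in $B_{2\varepsilon}(x_i)$, test $J_\lambda'(u_n)$ against $\phi_{i,\varepsilon}u_n$, and let $n\to\infty$ then $\varepsilon\to 0$; the subcritical term $\lambda\int f(u_n)\phi_{i,\varepsilon}u_n$ vanishes in the limit because \eqref{f2}, \eqref{m1} with $m^+<p_*^-$ make $F$ subcritical, hence its Nemytskii operator is compact on $B_{2\varepsilon}(x_i)$ after Theorem \ref{CCP} is combined with a local compact embedding; one is left with $\mu_i\leq\nu_i$, which together with \eqref{CCP3} forces either $\nu_i=0$ or $\nu_i$ bounded below by a positive constant depending only on $S,p^\pm$. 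A parallel argument with cutoffs supported in $|x|>R$ gives $\mu_\infty\leq\nu_\infty$ and the same dichotomy via \eqref{CCP6}. Then, evaluating $c_\lambda=J_\lambda(u_n)-\tfrac1\theta\langle J_\lambda'(u_n),u_n\rangle+o(1)$ and using \eqref{CCP4}--\eqref{CCP5}, any surviving atom or mass at infinity contributes at least a fixed positive amount $\kappa=\kappa(S,p^\pm)>0$ to $c_\lambda$; on the other hand, choosing the test path $t\mapsto t e$ with $e$ a fixed nonnegative bump and using \eqref{f3} one checks that $c_\lambda\to 0$ as $\lambda\to+\infty$ (the $\lambda F$-term pushes the maximum of $J_\lambda$ along the ray down to $0$). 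Hence there is $\lambda_*>0$ such that $c_\lambda<\kappa$ for $\lambda>\lambda_*$, which excludes all concentration. Therefore $G_{\frac{N}{s}}(|u_n|)\to G_{\frac{N}{s}}(|u|)$ in $L^1(\mathbb{R}^N)$, the Brezis--Lieb / Vitali argument upgrades $u_n\to u$ strongly in $X$, and $u$ is a critical point of $J_\lambda$ with $J_\lambda(u)=c_\lambda>0$, hence a nontrivial weak solution of \eqref{m.equation}.

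The main obstacle I anticipate is the local compactness step inside the concentration analysis: because $G$ and $G_{\frac{N}{s}}$ are genuine Young functions with nonhomogeneous growth governed only by the bounds $p^\pm$ and $p_*^\pm$, one cannot manipulate exponents freely as in the $p$-Laplacian case, and every estimate involving $\mathcal{D}^s u_n$, the cutoff commutator $G\!\big(|D^s(\phi u_n)|\big)-\phi\,G(|D^s u_n|)$, and the passage between modular and norm must be carried out through the $\Delta_2$-type inequalities encoded in \eqref{G1}; controlling the Gagliardo commutator term in the fractional Orlicz setting — showing its contribution to the limiting measure vanishes as $\varepsilon\to 0$ — is the technically delicate point, and it is exactly where the max/min structure of \eqref{CCP3} and \eqref{CCP6} is needed rather than a clean single inequality.
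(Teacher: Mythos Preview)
Your proposal is correct and follows essentially the same route as the paper: mountain pass geometry for $J_\lambda$, boundedness of Palais--Smale sequences via the $\theta$-combination, application of Theorem \ref{CCP}, ruling out concentration atoms and mass at infinity by testing $J_\lambda'(u_n)$ against cutoff functions together with the fact that $c_\lambda\to 0$ as $\lambda\to\infty$, and finally recovering strong convergence. The only points where the paper differs in detail are that the cutoff test actually yields the equality $p^+\mu_i=p^-\nu_i$ (not merely $\mu_i\leq\nu_i$), and the passage from $u_n\to u$ in $L^{G_{N/s}}(\mathbb{R}^N)$ to strong convergence in $X$ is not obtained by a Brezis--Lieb/Vitali argument alone but by showing $\langle(\Phi_{s,G}+\Phi_G)'(u_n),u_n-u\rangle\to 0$ and invoking the $(S_+)$ property of $\Phi_{s,G}+\Phi_G$.
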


\section{Preliminaries}

In this section, we will define the fractional order Orlicz-Sobolev spaces and we introduce some technical results that will be used throughout the paper.
\subsection{Young functions}

An application $G\colon\R_{+}\to \R_{+}$ is said to be a  \emph{Young function} if it admits the integral formulation $G(t)=\int_0^t g(\tau)\,d\tau$, where the right continuous function $g$ defined on $[0,\infty)$ has the following properties:
\begin{align}
&g(0)=0, \quad g(t)>0 \text{ for } t>0 \label{g0},\\
&g \text{ is non-decreasing on } (0,\infty) \label{g2},\\
&\lim_{t\to\infty}g(t)=\infty  \label{g3}.
\end{align}
From these properties, it is easy to see that a Young function $G$ is continuous, nonnegative, strictly increasing and convex on $[0,\infty)$.

The following  properties on Young functions are well-known. See for instance \cite{Bonder-Salort1} for the proofs.

\begin{lemma} \label{lema.prop}
Let $G$ be a Young function satisfying \eqref{G1} and $a,b\geq 0$. Then
\begin{align}
  &\min\{ a^{p^-}, a^{p^+}\} G(b) \leq G(ab)\leq   \max\{a^{p^-},a^{p^+}\} G(b),\label{L1}\\
  &\forall \delta>0,\ \exists C_\delta\ \text{such that}\quad  G(a+b)\leq C_\delta G(a)+(1+\delta)^{p^+}G(b),\label{L2}\\
	&G \text{ is Lipschitz continuous}. \label{L_3}
 \end{align}
\end{lemma}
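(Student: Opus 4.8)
The plan is to treat \eqref{L1} as the core estimate, to deduce \eqref{L2} from it by an elementary dichotomy on the relative sizes of $a$ and $b$, and to obtain \eqref{L_3} directly from convexity of $G$.

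For \eqref{L1}, the key point is that \eqref{G1} is equivalent to two monotonicity statements: $t\mapsto G(t)/t^{p^+}$ is non-increasing on $(0,\infty)$ and $t\mapsto G(t)/t^{p^-}$ is non-decreasing on $(0,\infty)$. Indeed, since $G(t)=\int_0^t g$ with $g$ non-decreasing and locally bounded, $G$ is locally absolutely continuous with $G'=g$ a.e., so for $q\in\{p^-,p^+\}$ the function $\log\big(G(t)/t^{q}\big)$ is locally absolutely continuous with a.e. derivative $\tfrac1t\big(\tfrac{tg(t)}{G(t)}-q\big)$; by \eqref{G1} this is $\le 0$ for $q=p^+$ and $\ge 0$ for $q=p^-$, which gives the claimed monotonicity (note $G(b)>0$ for $b>0$ since $G$ is strictly increasing with $G(0)=0$, so the division is legitimate). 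Now fix $b>0$ and $a>0$. If $a\ge 1$ then $ab\ge b$, so $G(ab)/(ab)^{p^+}\le G(b)/b^{p^+}$ gives $G(ab)\le a^{p^+}G(b)$ and $G(ab)/(ab)^{p^-}\ge G(b)/b^{p^-}$ gives $G(ab)\ge a^{p^-}G(b)$; since $a\ge 1$, here $a^{p^-}=\min\{a^{p^-},a^{p^+}\}$ and $a^{p^+}=\max\{a^{p^-},a^{p^+}\}$, which is \eqref{L1}. If $0<a<1$ the two inequalities reverse (because $ab<b$), giving $a^{p^+}G(b)\le G(ab)\le a^{p^-}G(b)$, and now $a^{p^+}$ is the min and $a^{p^-}$ the max, so \eqref{L1} again holds. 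The cases $a=0$ or $b=0$ are trivial since then $G(ab)=G(0)=0$.

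For \eqref{L2}, fix $\delta>0$ and split into two cases. If $a\le\delta b$, then $a+b\le(1+\delta)b$, so monotonicity of $G$ together with \eqref{L1} (applied with factor $1+\delta\ge 1$) yields $G(a+b)\le G\big((1+\delta)b\big)\le(1+\delta)^{p^+}G(b)$. If $a>\delta b$, then $a+b<a\big(1+\tfrac1\delta\big)=\tfrac{1+\delta}{\delta}\,a$, so monotonicity of $G$ and \eqref{L1} (with factor $\tfrac{1+\delta}{\delta}\ge 1$) give $G(a+b)\le\big(\tfrac{1+\delta}{\delta}\big)^{p^+}G(a)$. In either case, adding the nonnegative omitted term shows $G(a+b)\le\big(\tfrac{1+\delta}{\delta}\big)^{p^+}G(a)+(1+\delta)^{p^+}G(b)$, so $C_\delta=\big(\tfrac{1+\delta}{\delta}\big)^{p^+}$ works (the degenerate cases $a=0$ or $b=0$ fall into one of the two cases and are covered). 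Finally, for \eqref{L_3}, $G$ is convex and finite on $[0,\infty)$, hence locally Lipschitz; concretely, since $g$ is non-decreasing, $0\le G(t)-G(s)=\int_s^t g\le g(T)(t-s)$ for $0\le s\le t\le T$, so $G$ is Lipschitz on each bounded interval $[0,T]$ with constant $g(T)$ (it is not globally Lipschitz, as $g(t)\to\infty$ by \eqref{g3}, so this is the intended meaning).

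I do not expect a genuine obstacle here: the whole argument is short and essentially bookkeeping. The one spot deserving care is the passage in \eqref{L1} from the a.e. sign of the derivative of $\log\big(G(t)/t^{q}\big)$ to monotonicity of $G(t)/t^{q}$ — this is valid precisely because $G$, being the integral of the locally bounded non-decreasing function $g$, is locally absolutely continuous, together with the elementary fact that $G(b)>0$ for $b>0$ which makes the ratios and logarithms well defined.
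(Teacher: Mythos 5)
Your proof is correct. Note that the paper itself does not prove this lemma at all --- it only cites \cite{Bonder-Salort1} --- so there is no internal argument to compare against; what you wrote is essentially the standard proof found in that reference. For \eqref{L1}, integrating the two-sided bound \eqref{G1} (equivalently, your observation that $t\mapsto G(t)/t^{p^+}$ is non-increasing and $t\mapsto G(t)/t^{p^-}$ is non-decreasing, justified via the a.e. derivative of $\log\bigl(G(t)/t^{q}\bigr)$ and local absolute continuity of $G$) is exactly the usual route, and your case split $a\ge 1$ versus $a<1$ handles the min/max correctly. For \eqref{L2}, the dichotomy $a\le \delta b$ versus $a>\delta b$ with the explicit constant $C_\delta=\bigl(\tfrac{1+\delta}{\delta}\bigr)^{p^+}$ is a slightly more elementary alternative to the convexity-splitting argument sometimes used in the literature, but it proves the same statement. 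Your caveat on \eqref{L_3} is also well taken: since $g$ is non-decreasing with $g(t)\to\infty$, $G$ cannot be globally Lipschitz on $[0,\infty)$, so the assertion can only mean Lipschitz continuity on bounded intervals (local Lipschitz continuity), which your estimate $0\le G(t)-G(s)\le g(T)(t-s)$ for $0\le s\le t\le T$ establishes; this is in fact how the property is used in the paper.
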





The \emph{complementary Young function} $\tilde G$ of a Young function $G$ is defined as
$$
\tilde G(t):=\sup\{tw -G(w): w>0\}.
$$
The functions $G$ and $\tilde{G}$
are complementary each other and satisfy the inequality below
\begin{equation} \label{ineb}
\tilde{G}\left(g(t)\right)\leq G(2t),\qquad \text{for all } t > 0.
\end{equation}
We also have a Young type inequality given by
\begin{equation} \label{Young}
ab\leq G(a)+\tilde G(b)\qquad \text{for all }a,b\geq 0.
\end{equation}
 Moreover, it is not hard to see that $\tilde G$ can be written as
\begin{equation} \label{xxxx}
\tilde G(t)=\int_0^t \tilde{g}(\tau)\,d\tau,
\end{equation}
where $\tilde{g}(t)=\sup\{s:\ g(s)\leq t\}$. If $g$ is continuous then $\tilde{g}$ is the inverse of $g$.

In order to state some embedding results for fractional Orlicz-Sobolev spaces we recall that given two Young functions $A$ and $B$, we say that \emph{$B$ is essentially stronger than $A$} or equivalently that \emph{$A$ decreases essentially more rapidly than $B$}, and denoted by $A\prec \prec B$, if for each $a>0$ there exists $x_a\geq 0$ such that $A(x)\leq B(ax)$ for $x\geq x_a$. This is the case if and only if for
every positive constant $\lambda$,
\begin{equation} \label{do2}
 \lim\limits_{\substack{  t\to \infty}} \frac{A(\lambda t)}{B( t)}=0.
 \end{equation}

Let's introduce the notion of the Matuszewska-Orlicz function and the Matuszewska-Orlicz index.
\begin{dfn}
   Given a Young function $G$, we define the associated Matuszewska-Orlicz function as
$$
M(t, G):=\limsup _{\tau \rightarrow \infty} \frac{G(\tau t)}{A(\tau)} .
$$

When no confusion arises, we will simply denote $M(t)=M(t, G)$.

The Matuszewska-Orlicz index is then defined as
$$
p_{\infty}(G):=\lim _{t \rightarrow \infty} \frac{\ln M(t, G)}{\ln t}=\inf _{t>0} \frac{\ln M(t, G)}{\ln t} .
$$
Again, when no confusion arises, we will simply denote $p_{\infty}=p_{\infty}(G)$.
\end{dfn}

The main feature that we use in this article is the fact that, if $G$ verifies the $\Delta_2$-condition, then the index $p_{\infty}$ is finite and for any $\varepsilon>0$, there exists $t_0>0$ such that
$$
t^{p_{\infty}} \leq M(t, G) \leq t^{p_{\infty}+\varepsilon}, \text { for } t \geq t_0.
$$

See \cite{Arriagada} for this fact and more properties of this index.

 It is also easy to check that if $G$ satisfy \eqref{G1}), then $p^{-} \leq p_{\infty} \leq p^{+}$, and that
$$
\min \left\{t^{p^{+}}, t^{p^{-}}\right\} M(\tau) \leq M(\tau t) \leq \max \left\{t^{p^{+}}, t^{p^{-}}\right\} M(\tau) .
$$

If $G$ verifies \eqref{G1}, then $M$ is a Young function (\cite[Lemma 2.9]{Bonder3}).

\subsection{Fractional Orlicz-Sobolev spaces}

Given a Young function $G$ such that $G'=g$ and a parameter $s\in(0,1)$. We consider the spaces
\begin{align*}
&L^G(\R^N) :=\left\{ u\colon \R^N \to \R \text{ measurable  such that }  \Phi_{G}\left(\lambda u\right) < \infty \text{ for some }\lambda>0 \right\},\\
&W^{s,G}(\R^N):=\left\{ u\in L^G(\R^N) \text{ such that } \Phi_{s,G}\left(\lambda u\right) < \infty \text{ for some }\lambda>0 \right\},
\end{align*}
where the modulars $\Phi_{G}$ and $\Phi_{s,G}$ are defined as
\begin{align*}
&\Phi_{G}(u):=\int_{\R^N} G(|u(x)|)\,dx,\\
&\Phi_{s,G}(u):=
  \iint_{\R^N\times\R^N} G( |D^s u(x,y)|)  \,d\sigma,
\end{align*}
where  the \emph{$s-$H\"older quotient} is defined as
$$
D^s u(x,y):=\frac{u(x)-u(y)}{|x-y|^s},
$$
being $d\sigma(x,y):=\frac{ dx\,dy}{|x-y|^N}$.
These spaces are endowed with the so-called \emph{Luxemburg norms}
\begin{align*}
&\|u\|_{G} := \inf\left\{\lambda>0\colon \Phi_{G}\left(\frac{u}{\lambda}\right)\le 1\right\},\\
&\|u\|_{s,G} := \inf\left\{\lambda>0\colon \ \rho\left(\frac{u}{\lambda}\right)\leq 1\right\},
\end{align*}
where
$$
\rho(u)=\Phi_{G}(u)+\Phi_{s,G}(u).
$$
The Orlicz space can be defined with respect to a complete measure $\mu$ in $\mathbb{R}^N$, and the associated norm is represented as $\|\cdot\|_{G, d\mu}$.

We have the following H\"older's type inequality given by
$$
\int_{\R^N} |uv|\,dx \leq 2\|u\|_{G} \|v\|_{\tilde{G}},
$$
for all $u\in L^G(\R^N)$ and $v\in L^{\tilde G}(\R^N)$.

Under the assumption \eqref{G1}, the space $W^{s,G}(\R^N)$ is a reflexive Banach space. Moreover $C_c^\infty(\R^N)$ is dense in $W^{s,G}(\R^N)$. Also, the spaces $L^G(\R^N)$ and $W^{s,G}(\R^N)$ coincide with the set of measurable functions $u$ on $\R^N$ such that $\Phi_{G}\left( u\right) < \infty$ and the set of functions in $L^G(\R^N)$  such that $ \Phi_{s,G}\left( u\right) < \infty$ respectively. See \cite[Proposition 2.11]{Bonder-Salort1} and \cite[Theorem 4.7.3]{FS} for details.

The space of fractional Orlicz-Sobolev functions is the appropriated one to define the \emph{fractional $g-$Laplacian operator}
$$
(-\Delta_g)^s u :=2 \,\text{p.v.} \int_{\R^N} g( |D^s u|) \frac{D^s u}{|D^s u|} \frac{dy}{|x-y|^{N+s}},
$$
where \text{p.v.} stands for {\em in principal value}. This operator  is well defined between $W^{s,G}(\R^N)$ and its dual space $W^{-s,\tilde G}(\R^N)$ (see \cite{Bonder-Salort1} for details). In fact,
$$
\langle (-\Delta_g)^s u,v \rangle =   \iint_{\R^N\times\R^N} g(|D^s u|) \frac{D^s u}{|D^s u|}  D^s v \,d\mu,\quad\text{for all }v\in W^{s,G}(\R^N).
$$

A relation between modulars and norms holds. See \cite[Lemma 2.1]{Fukagai} and \cite[Lemma 13]{Sabri1}.
\begin{lemma}\label{modulars-norms}
   Let $G$ be a Young function satisfying \eqref{G1} and let $G_0(t)=\min\{t^{p^-},t^{p^+}\}$, $G_\infty(t)=\max\{t^{p^-},t^{p^+}\}$, for all $t\geq0$.
    Then
    \begin{itemize}
      \item[(i)] $G_0(\|u\|_{L^G(\R^N)})\leq\Phi_{G}(u)\leq G_\infty(\|u\|_{L^G(\R^N)})\ \text{for}\ u\in L^{G}(\R^N)$,
      \item[(ii)] $G_0(\|u\|_{s,G})\leq \rho(u) \leq G_\infty(\|u\|_{s,G})\ \text{for}\ u\in W^{s,G}(\R^N)$.
    \end{itemize}
 \end{lemma}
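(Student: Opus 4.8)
The plan is to deduce both statements directly from the elementary scaling inequality \eqref{L1} in Lemma \ref{lema.prop}, combined with the fact that, under \eqref{G1}, $G$ satisfies the $\Delta_2$-condition (a consequence of the bound $tg(t)/G(t)\le p^+<\infty$), so that the Luxemburg norm is \emph{realized}: the relevant modular equals $1$ at the norm.

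For (i), I would fix $u\in L^G(\R^N)$ with $u\neq 0$ (the case $u=0$ is trivial, all three quantities vanishing) and set $\lambda:=\|u\|_{L^G(\R^N)}>0$. First I recall the standard property of Luxemburg norms in the $\Delta_2$ setting: the map $t\mapsto\Phi_G(u/t)$ is finite, continuous and strictly decreasing on $(0,\infty)$ with limit $0$ at $+\infty$ and $+\infty$ at $0^+$, so that $\Phi_G(u/\lambda)=1$. Then, writing $|u(x)|=\lambda\cdot(|u(x)|/\lambda)$ and applying \eqref{L1} pointwise with $a=\lambda$ and $b=|u(x)|/\lambda$, integration over $\R^N$ gives
$$\min\{\lambda^{p^-},\lambda^{p^+}\}\,\Phi_G(u/\lambda)\le\Phi_G(u)\le\max\{\lambda^{p^-},\lambda^{p^+}\}\,\Phi_G(u/\lambda).$$
Substituting $\Phi_G(u/\lambda)=1$ yields exactly $G_0(\lambda)\le\Phi_G(u)\le G_\infty(\lambda)$, which is (i).

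For (ii) the argument is word-for-word the same after replacing $\Phi_G$ by the full modular $\rho=\Phi_G+\Phi_{s,G}$, $L^G(\R^N)$ by $W^{s,G}(\R^N)$, and $\lambda$ by $\|u\|_{s,G}$. Since $D^s$ is linear one has $D^s(u/\lambda)=(D^s u)/\lambda$, so \eqref{L1} applies verbatim inside the double integral defining $\Phi_{s,G}$; adding the two integrated inequalities gives
$$\min\{\lambda^{p^-},\lambda^{p^+}\}\,\rho(u/\lambda)\le\rho(u)\le\max\{\lambda^{p^-},\lambda^{p^+}\}\,\rho(u/\lambda),$$
and since $\rho$ is again a continuous, strictly decreasing function of the dilation parameter (by $\Delta_2$), $\rho(u/\lambda)=1$, which gives (ii).

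The only non-routine point is the claim $\Phi_G(u/\|u\|_{L^G(\R^N)})=1$ (and likewise $\rho(u/\|u\|_{s,G})=1$): I would either cite it as the standard property of Luxemburg norms associated with a modular satisfying the $\Delta_2$-condition, using the Orlicz-space references already present in the paper, or prove it in a couple of lines via the dominated convergence theorem (continuity of the modular along dilations, using $\Delta_2$ to dominate) together with the strict convexity/monotonicity of $G$ (strict monotonicity of $t\mapsto\Phi_G(u/t)$ where it is positive and finite). Everything else is a direct substitution into \eqref{L1}.
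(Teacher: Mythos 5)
Your proof is correct and follows what is essentially the standard argument for this lemma, which the paper itself does not prove but only cites (\cite[Lemma 2.1]{Fukagai} and \cite[Lemma 13]{Sabri1}): apply the scaling inequality \eqref{L1} with $a=\lambda$ the Luxemburg norm and use that the modular equals $1$ at the norm, the latter being justified exactly as you indicate since \eqref{G1} forces the $\Delta_2$-condition. The one point you flagged as non-routine (attainment $\Phi_G(u/\|u\|_{G})=1$, resp.\ $\rho(u/\|u\|_{s,G})=1$) is indeed the only thing needing care, and your continuity/strict-monotonicity argument for the dilation map handles it, so no gap remains.
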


\section{Proof of the Concentration-Compactness Principles for $W^{s,G}(\mathbb{R}^N)$: Theorem \ref{CCP} }

\begin{proof}
  Let $\o_n =u_n-u$. Then $u_n\rightharpoonup 0$ in $W^{s,G}(\mathbb{R}^N)$. Invoking \eqref{Inequality-embedding}, we deduce that $w_n \rightarrow 0$ in $L_{loc}^B(\R^N)$ where $B  \prec\prec  G_{\frac{N}{s}}$. Hence, up to a subsequence, we have $w_n(x) \rightarrow 0$ a.e. $x \in \R^N.$ Using the fact that $ G_{\frac{N}{s}} ( |u_n|)\overset{\ast}{\rightharpoonup} \nu$ and \cite[Lemma 3.4]{Bonder3} or \cite[Theorem 3.3]{Sabri5}, we obtain
  $$
  \lim_{n \rightarrow +\infty} \left( \int_{\R^N}^{}\varphi G_{\frac{N}{s}} ( |u_n|)dx -\int_{\R^N}^{}\varphi G_{\frac{N}{s}} ( |w_n|)dx \right)= \int_{\R^N}^{}\varphi G_{\frac{N}{s}} ( |u|)dx
  $$
  for any $\varphi \in C^{\infty} _c (\R^N)$, from where the representation
  $$
   G_{\frac{N}{s}} ( |w_n|)  \overset{\ast}{\rightharpoonup} \nu -G_{\frac{N}{s}} ( |u|):= \bar{\nu} \text{ in } \mathcal{R}(\R^N).
  $$
  The sequence $\left\{ G( |w_n|)+\int_{\R^N}^{} G( |D^s w_n|) \frac{dy}{|x-y|^N} \right\}$ is bounded in $L^1(\R^N).$ So, up to a subsequence, we have
  $$
  G( |w_n|)+\int_{\R^N}^{} G( |D^s w_n|) \frac{dy}{|x-y|^N} \overset{\ast}{\rightharpoonup} \bar{\mu} \text{ in } \mathcal{R}(\R^N).
  $$
  Let $\phi \in C^{\infty} _c (\R^N)$  and let $R>2$  be such that $ supp(\phi) \subset B_R$ and $ d:= dist\big( B^c_R, supp(\phi) \big) \geq 1+\frac{R}{2}.$\\
 By \eqref{Inequality-embedding}, we have
 $$
 S\| \phi w_n \|_{G_{\frac{N}{s}} } \leq \| \phi w_n \|_{s,G}.
 $$
  Set $\bar{\nu}_n:= G_{\frac{N}{s}}( |w_n|), \ \bar{\mu}_n: G( |w_n|)+\ds\int_{\R^N}^{} G( |D^s w_n|) \frac{dy}{|x-y|^N}$,  $\lambda_n:= \| \phi w_n \|_{s,G}$ (we have temporarily modified the notation of $\mathcal{D}^s u_n$ to facilitate its usage.).
  From \eqref{L2} and Lemma \ref{modulars-norms}, we have
  \begin{equation}\label{PCCP2}
  \begin{aligned}
  1&=\rho \big( \frac{|\phi w_n|}{\l_n} \big)\\
  &\leq \int_{\R^N}^{}G\left( \frac{|\phi w_n|}{\l_n} \right) dx  +(1+\delta )^{p^+}\int_{\R^N}^{}\int_{\R^N}^{}G \bigg( \frac{|\phi(x)|}{\l_n} |D^s w_n| \bigg) \frac{dxdy}{|x-y|^N}\\
  & \ \ + C(\delta) \int_{\R^N}^{}\int_{\R^N}^{} G\bigg( \frac{|w_n(y)|}{\l_n} |D^s \phi| \bigg) \frac{dxdy}{|x-y|^N}\\
  & \leq\|\phi\|_\infty \bigg[\int_{\R^N}^{} G\left( \frac{|w_n|}{\l_n}\right) dx + (1+\delta )^{p^+}\int_{\R^N}^{}\int_{\R^N}^{}G \bigg( \frac{|D^s w_n|}{\l_n}  \bigg) \frac{dxdy}{|x-y|^N}\bigg] +c(\delta)I_n,
  \end{aligned}
   \end{equation}
where
$$
I_n:=  \int_{\R^N}^{}\int_{\R^N}^{} G\bigg( \frac{|w_n(y)|}{\l_n} |D^s \phi| \bigg) \frac{dxdy}{|x-y|^N}.
$$
If $\l_n >1,$ then $\frac{1}{\l_n}<1.$ Invoking Lemma \ref{modulars-norms}, we get
\begin{equation}\label{PCCP7}
\begin{aligned}
1 & \leq \frac{\|\phi\|_\infty}{\l_n} (1+\delta )^{p^+} \rho(w_n) +C(\delta)I_n\\
&\leq \frac{\|\phi\|_\infty}{\l_n} (1+\delta)^{p^+} \big(  1+\|w_n\|_{s,G}\big) +C(\delta)I_n.
\end{aligned}
\end{equation}
\underline{{\bf Claim 1:}}
$$
I_n \leq \frac{c}{\min (\l_n^{p^-}, \ \l_n^{p^+})} \left[\frac{1}{(\frac{R}{2})^{N+sp^-}}+\max \left\{ \frac{1}{R^{sp^-}}, \ \frac{1}{R^{sp^+}} \right\} + \max \left\{ \frac{1}{(2R)^{(s-1)p^-}},\frac{1}{(2R)^{(s-1)p^+} }\right\}\right].
$$
Let's prove the Claim.  From \eqref{L1}, we have
\begin{align}\label{PCCP1}
  I_n \leq & \int_{\R^N}^{} G_\infty\bigg( \frac{|w_n|}{\l_n}\bigg) \int_{\R^N} G(|D^s \phi|) \frac{dy}{ |x-y|^N} dx\nonumber\\
\leq & \frac{1}{\min \big(\l_n^{p^-}, \ \l_n^{p^+}\big)} \bigg[  \int_{B^c_R}^{} G_\infty\bigg( \frac{|w_n|}{\l_n}\bigg) \int_{B_R} G(|D^s \phi|) \frac{dy}{ |x-y|^N} dx\nonumber\\
& + \int_{B_R}^{} G_\infty\bigg( \frac{|w_n|}{\l_n}\bigg) \int_{B^c_R} G(|D^s \phi|) \frac{dy}{ |x-y|^N} dx\nonumber\\
& +\int_{B_R}^{} G_\infty\bigg( \frac{|w_n|}{\l_n}\bigg) \int_{B_R} G(|D^s \phi|) \frac{dy}{ |x-y|^N} dx\bigg].
\end{align}
Now, we will estimate each integral in the right hand side of \eqref{PCCP1}.\\
Since $G_\infty\prec\prec G_{\frac{N}{s}}$, then by \eqref{Inequality-embedding} and the boundness of $(w_n)_n$ in $\G$, we can find $c>0$ such that
$$
\int_{\R^N}^{} G_\infty\big(|w_n| \big) dx \leq c.
$$
Here, $c$ is a positive constant independent of both $n$ and $R$, whereas $c(R)$ represents a positive constant that remains independent of $n$ but can vary from line to line.\\
Using \eqref{L1}, we have
$$
\begin{aligned}
&  \int_{B^c_R}^{} G_\infty\big( |w_n|\big) \int_{B_R} G(|D^s \phi|) \frac{dy}{ |x-y|^N} dx\\
& = \int_{B^c_R}^{} G_\infty\big( |w_n|\big) \int_{supp (\phi)} G\bigg(\frac{| \phi(y)|}{|x-y|^s}\bigg) \frac{dy}{ |x-y|^N} dx\\
&\leq \|\phi\|_\infty\int_{B^c_R}^{} G_\infty\big( |w_n|\big) \int_{supp (\phi)}G(1) G_\infty\bigg(\frac{1}{|x-y|^s}\bigg) \frac{dy}{ |x-y|^N} dx\\
&\leq \|\phi\|_\infty G(1)\int_{B^c_R}^{} G_\infty\big( |w_n|\big) \int_{supp (\phi)} G_\infty\bigg((\frac{R}{2})^{-s}\bigg)(\frac{R}{2})^{-N} dy dx\\
&\leq \|\phi\|_\infty G(1)\int_{B^c_R}^{} G_\infty\big( |w_n|\big) \int_{supp (\phi)} (\frac{R}{2})^{-N-sp^-} dy dx\\
& \leq \frac{\|\phi\|_\infty G(1)}{(\frac{R}{2})^{N+sp^-}} \big| B_R \big| \int_{\R^N}^{} G_\infty\big( |w_n|\big)dx  \leq \frac{c}{(\frac{R}{2})^{N+sp^-}}.
\end{aligned}
$$
For the second integral in the right-hand side of \eqref{PCCP1}, we note that
$$
\begin{aligned}
\int_{B_R^c}^{} G \bigg( \big| D^s \phi \big| \bigg)\frac{dy}{ |x-y|^N}=& \int_{B_R^c}^{} G \bigg( \frac{\big| \phi (x) \big|}{|x-y|^s} \bigg)\frac{dy}{ |x-y|^N}\\
&\leq \|\phi\|_\infty G(1)\int_{B^c_R}^{} G_\infty\bigg(\frac{1}{|x-y|^s}\bigg) \frac{dy}{ |x-y|^N}\\
&\leq \|\phi\|_\infty G(1)\int_{|z|>1}^{} G_\infty\bigg(\frac{1}{|z|^s}\bigg) \frac{dz}{ |z|^N}\\
& \leq c \max \big\{ \frac{1}{R^{sp^-}}, \ \frac{1}{R^{sp^+}} \big\}.
\end{aligned}
$$
Then
$$
\begin{aligned}
\int_{B_R}^{} G_\infty \big(|w_n|\big)\int_{B_R^c} G(|D^s \phi|) \frac{dy}{ |x-y|^N} dx&\leq c \max \big\{ \frac{1}{R^{sp^-}}, \ \frac{1}{R^{sp^+}} \big\} \int_{B_R}^{} G \big(|w_n|\big)dx \\
&\leq  c \max \big\{ \frac{1}{R^{sp^-}}, \ \frac{1}{R^{sp^+}} \big\}.
\end{aligned}
$$
Let's estimate the last integral in the right-hand side of \eqref{PCCP1}, for $x\in B_R,$ we have
$$
\begin{aligned}
\int_{B_R} G(|D^s \phi|) \frac{dy}{ |x-y|^N} dx &\leq \| \nabla \phi \|_\infty  \int_{B_R} G \bigg( \frac{1}{|x-y|^{s-1}}\bigg) \frac{dy}{ |x-y|^N}\\
& =  \| \nabla \phi \|_\infty  \int_{|z|\leq 2R} G \bigg( \frac{1}{|z|^{s-1}}\bigg) \frac{dz}{ |z|^N}\\
&= \| \nabla \phi \|_\infty  \max \left\{  \frac{1}{\big( 2R\big)^{(s-1)p^-}}, \  \frac{1}{\big( 2R\big)^{(s-1)p^+}} \right\}.
\end{aligned}
$$
This yields
$$
\int_{B_R}^{} G_\infty\bigg( |w_n|\bigg) \int_{B_R} G(|D^s \phi|) \frac{dy}{ |x-y|^N} dx \leq c \max \left\{  \frac{1}{\big( 2R\big)^{p^-(s-1)}}, \  \frac{1}{\big( 2R\big)^{p^+(s-1)}} \right\}.
$$
Combining the last three estimates, we obtain the claim.

Letting $R\rightarrow +\infty$ in \eqref{PCCP7}, from Claim 1 and the boundness of $(w_n)$ in $W^{s,G}(\R^N)$, we deduce that
$$ 1\leq \frac{\|\phi \|_\infty}{\l_n} (1+\e)^{p^+} \big( 1+\| w_n\|_{s,G}\big)\quad \Rightarrow\quad \l_n \leq c.$$
Remember that  we considered $\l_n>1$, for the case  $\l_n <1$, it is enough to take $c=1.$

Thus $\l_n$ is a bounded sequence in $\R$. Consequently, we can assume, up to a subsequence, that there exists $\lambda_* \geq 0$ such that
$$
\lim_{n\to \infty} \l_n=\l_*.
$$
Suppose that $\l_*>0.$ Again, from \eqref{PCCP2}, we have
$$
\begin{aligned}
  1 &\leq (1+\delta)^{p^+}\bigg[ \int_{\R^N}G_\infty\left( \frac{|\phi|}{\l_n} \right)G(|w_n|) dx  +\int_{\R^N}^{}\int_{\R^N}^{}G_\infty \bigg( \frac{|\phi(x)|}{\l_n}\bigg) G( |D^s w_n|) \frac{dxdy}{|x-y|^N}\bigg] + C(\delta) I_n\\
   &\leq (1+\delta)^{p^+}\int_{\R^N}G_\infty\left( \frac{|\phi|}{\l_n} \right)d\bar{\mu}_n + C(\delta) I_n.
\end{aligned}
$$
Letting $n\to +\infty$ and $R\to +\infty$ in the last inequality, we obtain
$$
\begin{aligned}
  1 &\leq (1+\delta)^{p^+}\int_{\R^N}G_\infty\left( \frac{|\phi|}{\l_*} \right)d\bar{\mu}.
\end{aligned}
$$
Now, letting $\delta\to0^+,$ then
$$
1\leq \int_{\R^N}G_\infty\left( \frac{|\phi|}{\l_*} \right)d\bar{\mu},
$$
and so
$$
\lambda_*\leq \|\phi\|_{G_\infty, d\bar{\mu}}.
$$
We argue as the proof of \cite[Lemma 4.4]{Bonder3}, we get
\begin{equation}\label{PCCP5}
  \|\phi w_n\|_{G_{\frac{N}{s}}}\geq \|\phi \|_{M_{\frac{N}{s}},d\bar{\mu}},
\end{equation}
where $M_{\frac{N}{s}}$ is the Matuszewska-Orlicz function associated to $G_{\frac{N}{s}}$.
Then
\begin{equation}\label{PCCP3}
  S\|\phi \|_{M_{\frac{N}{s}},d\bar{\mu}}\leq \|\phi \|_{G_\infty,d\bar{\mu}}.
\end{equation}
Now, we can apply \cite[Lemma 4.7]{Bonder3}, we get
$$
\bar{\nu}=\sum_{i\in I}^{} \nu_i \delta_{x_i},
$$
which means $\nu=G_{\frac{N}{s}}(u)+\sum_{i\in I}^{} \nu_i \delta_{x_i}.$ Thus the proof of \eqref{CCP1}.

Next, we obtain \eqref{CCP3}. Let $\phi\in C^\infty_c (\R^N)$ be such that $0\leq \phi \leq 1,$ $\phi\equiv1$ on $B_1,$ $supp(\phi)\subset B_2.$
For each $i\in I$ and for $\e>0,$ define $\phi_{\e,i}(x):=\phi\left(\frac{x-x_i}{\e}\right).$
Thus $\phi_{\e,i}\in C^\infty_c (\R^N),$ $0\leq \phi_{\e,i}\leq1$, $\phi_{\e,i}\equiv1$ on $B_\e(x_i),$ $supp(\phi_{\e,i})\subset B_{2\e}(x_i).$

Combining \eqref{Inequality-embedding} and \eqref{PCCP5}, we get
\begin{equation}\label{PCCP4}
  S\|\phi_{\e,i} \|_{M_{\frac{N}{s}},d\bar{\nu}}\leq \|\phi_{\e,i} u_n\|_{s,G}.
\end{equation}
Applying Lemma \ref{modulars-norms} with the Young function $M_{\frac{N}{s}}$, we have
$$
\begin{aligned}
\|\phi_{\e,i} \|_{M_{\frac{N}{s}},d\bar{\nu}}&\geq \min\bigg\{\left(\int_{B_{2\e}(x_i)}M_{\frac{N}{s}}(|\phi_{\e,i}(x)|)d\bar{\nu}\right)^{\frac{1}{p^-_*}},
\left(\int_{B_{2\e}(x_i)}M_{\frac{N}{s}}(|\phi_{\e,i}(x)|)d\bar{\nu}\right)^{\frac{1}{p^+_*}} \bigg\}\\
&\geq\min\bigg\{\left(\bar{\nu}(B_\e(x_i))\right)^{\frac{1}{p^-_*}},
\left(\bar{\nu}(B_\e(x_i))\right)^{\frac{1}{p^+_*}} \bigg\}.
\end{aligned}
$$
Thus, we obtain a lower bound of the left-hand side of \eqref{PCCP4} as follows:
$$
\limsup_{\e\to 0^+}S\|\phi_{\e,i} \|_{M_{\frac{N}{s}},d\bar{\nu}}\geq S\min\left\{\left(\nu_i\right)^{\frac{1}{p^-_*}},
\left(\nu_i\right)^{\frac{1}{p^+_*}} \right\}.
$$
Let's now look for an upper bound of the right-hand side of \eqref{PCCP4}. Set $\l_{n,\e}:=\|\phi_{\e,i} u_n \|_{s,G}.$ We can see that
 $$
 0<\liminf_{n\to \infty}\l_{n,\e}=:\lambda_{*,\e}\leq \lambda_0\quad\text{for some}\quad \lambda_0>0\ \text{ and for any}\ \e \ \text{small enough}.
 $$
For any $\e>0,$ we have
\begin{equation}\label{PCCP13}
\begin{aligned}
1&=\rho\left(\frac{|\phi_{\e,i} u_n|}{\l_{n,\e}}\right)\\
&\leq  \int_{\R^N}^{}G\left( \frac{|\phi_{\e,i}u_n|}{\l_{n,\e}} \right) dx  +2\int_{B_{2\e}(x_i)}^{}\int_{B_{2\e}^c(x_i)}^{}G ( |D^s(\phi_{\e,i}u_n)| ) \frac{dydx}{|x-y|^N}\\
  & + \int_{B_{2\e}(x_i)}^{}\int_{B_{2\e}(x_i)}^{}G ( |D^s(\phi_{\e,i}u_n)| ) \frac{dydx}{|x-y|^N}\\
  &\leq   \int_{\R^N}^{}G_\infty\left( \frac{|\phi_{\e,i}|}{\l_{n,\e}} \right)G(|u_n|) dx  +2\int_{B_{2\e}(x_i)}^{}\int_{B_{2\e}^c(x_i)}^{}G_\infty\left( \frac{|u_n(x)|}{\l_{n,\e}} \right)G ( |D^s(\phi_{\e,i})| ) \frac{dydx}{|x-y|^N}\\
  & +(1+\delta)^{p^+} \int_{B_{2\e}(x_i)}^{}\int_{B_{2\e}(x_i)}^{}G_\infty\left( \frac{|\phi_{\e,i}|}{\l_{n,\e}} \right)G ( |D^s u_n| ) \frac{dydx}{|x-y|^N}\\
  &+ C(\delta) \int_{B_{2\e}(x_i)}^{}\int_{B_{2\e}(x_i)}^{}G_\infty\left( \frac{|u_n(y)|}{\l_{n,\e}} \right)G ( |D^s \phi_{\e,i}| ) \frac{dydx}{|x-y|^N}\\
  &\leq \frac{(2+C(\delta)}{\min\left\{\l_{n,\e}^{p^-},\l_{n,\e}^{p^+}\right\}}\int_{\R^N}^{}\int_{\R^N}^{}G_\infty\left(|u_n(x)| \right)G ( |D^s \phi_{\e,i}| ) \frac{dydx}{|x-y|^N}\\
  &+\frac{(1+\delta)^{p^+}}{\min\left\{\l_{n,\e}^{p^-},\l_{n,\e}^{p^+}\right\}}\int_{\R^N} G_\infty\left( |\phi_{\e,i}(x)| \right)\mathcal{D}^s u_n(x) dx.
\end{aligned}
\end{equation}
Letting $n\to\infty$ in the previous inequality, we obtain
\begin{equation}\label{PCCP6}
\begin{aligned}
1&\leq  \frac{(2+C(\delta)}{\min\left\{\l_{*,\e}^{p^-},\l_{*,\e}^{p^+}\right\}}\limsup_{n\to\infty}\int_{\R^N}^{}\int_{\R^N}^{}G_\infty\left(|u_n(x)| \right)G ( |D^s \phi_{\e,i}| ) \frac{dydx}{|x-y|^N}\\
  &+\frac{(1+\delta)^{p^+}}{\min\left\{\l_{*,\e}^{p^-},\l_{*,\e}^{p^+}\right\}}\int_{\R^N} G_\infty\left( |\phi_{\e,i}(x)| \right)d\mu.
\end{aligned}
\end{equation}

\underline{{\bf Claim 2:}}
$$
\lim_{\e\to 0^+}\limsup_{n\to\infty}\int_{\R^N}^{}\int_{\R^N}^{}G_\infty\left(|u_n(x)| \right)G ( |D^s \phi_{\e,i}| ) \frac{dydx}{|x-y|^N}=0.
$$

Claim 2 can be inferred by conducting a meticulous examination of the proof presented in \cite[Lemma 4.4]{Ho-Kim}.

From Claim 2 and letting $\e\to 0^+$ in \eqref{PCCP6}, we get
\begin{equation}\label{PCCP8}
\min\left\{\l_{*,0}^{p^-},\l_{*,0}^{p^+}\right\}\leq (1+\delta)^{p^+}\max\left\{\mu_i^{p^-},\mu_i^{p^+}\right\},
\end{equation}
where $\l_{*,0}=\lim_{\e\to0^+}\lambda_{*,\e}$ and $\mu_i=\lim_{\e\to0^+}\mu(B_{2\e}(x_i))$.

Letting $\delta\to 0^+$ in \eqref{PCCP8}, we obtain
$$
\l_{*,0}\leq \max\left\{\left(\max\left\{\mu_i^{p^-},\mu_i^{p^+}\right\}\right)^{\frac{1}{p^-}},
\left(\max\left\{\mu_i^{p^-},\mu_i^{p^+}\right\}\right)^{\frac{1}{p^+}}\right\}\leq \max\left\{\mu_i,\mu_i^{\frac{p^-}{p^+}},\mu_i^{\frac{p^+}{p^-}}\right\}.
$$
Thus the upper bound of \eqref{PCCP4}, and the proof of \eqref{CCP3} is complete.

{\color{red}Now, let's prove} \eqref{CCP2}.\\

It remains to prove \eqref{CCP4}-\eqref{CCP6}. Let $\phi\in C^\infty_c (\R^N)$ be such that $0\leq \phi\leq 1,$ $\phi\equiv0$ on $B_1,$ $\phi\equiv1$ on $\R^N\backslash B_2.$ For each $R>0,$ define $\phi_R(x)=\phi\left(\frac{x}{R}\right).$ Thus $\phi_R\in C^\infty_c (\R^N),$ $0\leq \phi_R\leq 1,$ $\phi_R\equiv0$ on $B_R$ and $\phi_R\equiv1$ on $B_{2R}.$

We then write that
$$
\int_{\R^N}\mathcal{D}^s u_n dx=\int_{\R^N}\phi_R(x)\mathcal{D}^s u_n dx+\int_{\R^N}(1-\phi_R(x))\mathcal{D}^s u_n dx.
$$
Observe that for all $n\in\mathbb{N}$ and $R>0$, we have
$$
\int_{B_{2R}^c}\mathcal{D}^s u_n dx\leq \int_{\R^N}\phi_R(x)\mathcal{D}^s u_n dx\leq \int_{B_{R}^c}\mathcal{D}^s u_n dx.
$$
So that
\begin{equation}\label{PCCP9}
  \mu_\infty = \lim_{R \rightarrow+\infty} \limsup_{n \rightarrow \infty} \int_{\R^N}\phi_R(x)\mathcal{D}^s u_n dx.
\end{equation}
On the other hand, since $1-\phi_R\in C^\infty_c (\R^N)$, we have
$$
\lim_{n\to\infty}\int_{\R^N}(1-\phi_R(x))\mathcal{D}^s u_n dx=\int_{\R^N}(1-\phi_R(x))d\mu.
$$
Since $\Phi_R\to 0$ pointwise, it follows from the Dominated Convergence Theorem that
$$
\lim_{R\to\infty} \int_{\R^N}\phi_R(x)d\mu=0.
$$
Hence
\begin{equation}\label{PCCP10}
  \lim_{R \rightarrow+\infty} \limsup_{n \rightarrow \infty}\int_{\R^N}(1-\phi_R(x))\mathcal{D}^s u_n dx=\mu(\R^N).
\end{equation}
Combining \eqref{PCCP9} and \eqref{PCCP10} we deduce \eqref{CCP5}. The proof of \eqref{CCP4} is similar.

Finally, we still have to prove \eqref{CCP6}. From \eqref{Inequality-embedding}, we have
\begin{equation}\label{PCCP11}
   S\|\phi_{R}u_n \|_{G_{\frac{N}{s}}}\leq \|\phi_{R} u_n\|_{s,G}.
\end{equation}
Using Lemma \ref{modulars-norms}, we have
$$
\begin{aligned}
\|\phi_{R}u_n \|_{G_{\frac{N}{s}}}&\geq \min\bigg\{\left(\int_{B_{R}^c}G_{\frac{N}{s}}(|\phi_{R}(x)u_n(x)|)dx\right)^{\frac{1}{p^-_*}},
\left(\int_{B_{R}^c}G_{\frac{N}{s}}(|\phi_{R}(x)u_n(x)|)dx\right)^{\frac{1}{p^+_*}} \bigg\}\\
&\min\bigg\{\left(\int_{B_{2R}^c}G_{\frac{N}{s}}(|u_n(x)|)dx\right)^{\frac{1}{p^-_*}},
\left(\int_{B_{2R}^c}G_{\frac{N}{s}}(|u_n(x)|)dx\right)^{\frac{1}{p^+_*}} \bigg\}.
\end{aligned}
$$
Thus,
\begin{equation}\label{PCCP12}
  \liminf_{R\to +\infty}\limsup_{n\to\infty}\|\phi_{R}u_n \|_{G_{\frac{N}{s}}}\geq\min\left\{(\nu_\infty)^{\frac{1}{p_*^-}},(\nu_\infty)^{\frac{1}{p_*^-}}\right\}
\end{equation}
Next, we will evaluate the right-hand side of \eqref{PCCP11}. Set $\sigma_{n,R}:=\|\phi_{R} u_n\|_{s,G}$. We can see that
$$
0< \limsup_{n\to\infty}\sigma_{n,R}=:\sigma_{*,R}\leq \sigma \quad\text{for some }\ \sigma>0\ \text{and}\ R\ \text{large enough}.
$$
Argue as in \eqref{PCCP13}, we obtain
\begin{equation}\label{PCCP14}
  \begin{aligned}
  1&=\rho\left(\frac{|\phi_{R} u_n|}{\sigma_{n,R}}\right)\\
  &\leq \frac{(2+C(\delta)}{\min\left\{\sigma_{n,R}^{p^-},\sigma_{n,R}^{p^+}\right\}}\int_{\R^N}^{}\int_{\R^N}^{}G_\infty\left(|u_n(x)| \right)G ( |D^s \phi_{R}| ) \frac{dydx}{|x-y|^N}\\
  &+\frac{(1+\delta)^{p^+}}{\min\left\{\sigma_{n,R}^{p^-},\sigma_{n,R}^{p^+}\right\}}\int_{\R^N} G_\infty\left( |\phi_{R}(x)| \right)\mathcal{D}^s u_n(x) dx.
  \end{aligned}
\end{equation}

Letting $n\to\infty$ in the previous inequality, we obtain
\begin{equation}\label{PCCP15}
\begin{aligned}
1&\leq  \frac{(2+C(\delta)}{\min\left\{\sigma_{*,R}^{p^-},\sigma_{*,R}^{p^+}\right\}}\limsup_{n\to\infty}\int_{\R^N}^{}\int_{\R^N}^{}G_\infty\left(|u_n(x)| \right)G ( |D^s \phi_{R}| ) \frac{dydx}{|x-y|^N}\\
  &+\frac{(1+\delta)^{p^+}}{\min\left\{\sigma_{*,R}^{p^-},\sigma_{*,R}^{p^+}\right\}}\int_{\R^N} G_\infty\left( |\phi_{R}(x)| \right)d\mu.
\end{aligned}
\end{equation}

\underline{{\bf Claim 3:}}
$$
\lim_{R\to+\infty}\limsup_{n\to\infty}\int_{\R^N}^{}\int_{\R^N}^{}G_\infty\left(|u_n(x)| \right)G ( |D^s \phi_{R}| ) \frac{dydx}{|x-y|^N}=0.
$$
Claim 3 can be inferred by conducting a meticulous examination of the proof presented in \cite[4.5]{Ho-Kim}.

From Claim 3 and letting $R\to +\infty$ in \eqref{PCCP15}, we get
\begin{equation}\label{PCCP16}
\min\left\{\sigma_{*,\infty}^{p^-},\sigma_{*,\infty}^{p^+}\right\}\leq (1+\delta)^{p^+}\max\left\{\mu_\infty^{p^-},\mu_\infty^{p^+}\right\},
\end{equation}
where $\sigma_{*,\infty}=\lim_{R\to +\infty}\sigma_{*,R}$.

Letting $\delta\to 0^+$ in \eqref{PCCP16}, we obtain
$$
\sigma_{*,\infty}\leq \max\left\{\mu_\infty,\mu_\infty^{\frac{p^-}{p^+}},\mu_\infty^{\frac{p^+}{p^-}}\right\}.
$$
Thus the upper bound of \eqref{PCCP11}, and the proof of \eqref{CCP6} is complete.

\end{proof}

\section{Existence of weak solutions for \eqref{m.equation}}

In this section, for the sake of simplicity, we will assume that the structural assumptions necessary for Theorem \ref{Existence}  are in effect, without the need for further elaboration.
\begin{dfn}\label{weak.solution}
  We say that $u\in W^{s,G}(\R^N)$ is a \emph{weak solution} of \eqref{m.equation} if
$$
\langle (-\Delta_g)^s u,v \rangle +\int_{\R^N} g(|u|)\frac{u}{|u|}v\, dx= \int_{\R^N} g_*(|u|)\frac{u}{|u|}v\, dx+\lambda\int_{\R^N} f(u)v\,dx
$$
for all $v\in W^{s,G}(\R^N)$.
\end{dfn}

 Certainly, the weak solutions of equation \eqref{m.equation} correspond precisely to the critical points of the Euler-Lagrange functional denoted as $I_\lambda$, which is associated with equation \eqref{m.equation}. In this context, the functional $I_\lambda$, defined on $ W^{s,G}(\mathbb{R}^{N})$ by

\begin{equation}\label{functional}
  I_\lambda(u)=\Phi_{s,G}(u)+\Phi_G(u)-\Phi_{G_{\frac{N}{s}}}(u)-\mathcal{F}(u),
\end{equation}
where
\begin{equation}\label{func.1}
 \mathcal{F}(u):=\int_{\R^N}F(u)\,dx,\quad F\ \text{ is given in}\ \eqref{primitive},
\end{equation}
which is well defined and of class $C^1$.\\

It will be shown that the functional $I_\lambda$ possesses the requisite geometric attributes to establish the existence of a Palais–Smale sequence at specific energy levels, leveraging the mountain pass theorem initially formulated by Ambrosetti and Rabinowitz.

\begin{lemma}\label{geo.condition}
$~$
The functional $I_\lambda$ satisfies the mountain pass geometry, that is,
\begin{itemize}
  \item [(i)] There exist $\rho>0$ and $\delta_{\rho}>0$ such that $I_\lambda(u)\geq \delta_{\rho}$ for any $u\in W^{s,G}(\R^N)$ with $\|u\|_{s,G}=\rho.$
  \item [(ii)] There exists a strictly positive function $e\in W^{s,G}(\R^N)$ such that $\|e\|_{s,G}>\rho$, $\|e\|_{G_{\frac{N}{s}}}>0$ and $I_\l(e)< 0.$
\end{itemize}
\end{lemma}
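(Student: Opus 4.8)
The plan is to verify the two items separately, using in an essential way the modular--norm inequalities of Lemma \ref{modulars-norms}, the growth estimate \eqref{L1}, the Sobolev inequality \eqref{Inequality-embedding}--\eqref{Sobolev-constant}, the behaviour of $f$ at $0$ and at $\infty$ encoded in \eqref{f1}--\eqref{f2}, and the superquadraticity condition \eqref{f3}. For item (i), I would first record that $I_\lambda(u)=\rho(u)-\Phi_{G_{\frac{N}{s}}}(u)-\mathcal F(u)$ by the definition \eqref{functional} together with $\rho(u)=\Phi_{s,G}(u)+\Phi_G(u)$. For $\|u\|_{s,G}=\rho\le 1$ small, Lemma \ref{modulars-norms}(ii) gives $\rho(u)\ge\|u\|_{s,G}^{p^+}=\rho^{p^+}$. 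For the critical term, \eqref{Inequality-embedding} and Lemma \ref{modulars-norms}(i) applied to $G_{\frac{N}{s}}$ yield $\Phi_{G_{\frac{N}{s}}}(u)\le\max\{\|u\|_{G_{\frac Ns}}^{p_*^-},\|u\|_{G_{\frac Ns}}^{p_*^+}\}\le C\|u\|_{s,G}^{p_*^-}$ once $\|u\|_{s,G}$ is small (since $p_*^-,p_*^+>p^+$, the largest power is irrelevant near $0$, and $C$ absorbs the Sobolev constant $S^{-1}$). For the subcritical perturbation $\mathcal F$, conditions \eqref{f1} and \eqref{f2} give, for every $\varepsilon>0$, a bound $|F(t)|\le \varepsilon G(t)+C_\varepsilon M(t)$; integrating and using that $W^{s,G}\hookrightarrow L^G$ and $W^{s,G}\hookrightarrow L^M$ (the latter because $m^+<p_*^-$, so $M\prec\prec G_{\frac{N}{s}}$, whence \cite{ACPS1} or \eqref{Inequality-embedding} applies), together with Lemma \ref{modulars-norms}, gives $\mathcal F(u)\le\varepsilon\, C\|u\|_{s,G}^{p^-}+C_\varepsilon C\|u\|_{s,G}^{m^-}$ for $\|u\|_{s,G}$ small. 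Collecting, for $\|u\|_{s,G}=\rho$ small,
$$
I_\lambda(u)\ \ge\ \rho^{p^+}-C\rho^{p_*^-}-\varepsilon C\rho^{p^-}-C_\varepsilon C\rho^{m^-}.
$$
Since $p^-\le p^+<m^-<p_*^-$, choosing first $\varepsilon$ small and then $\rho$ small makes the right-hand side strictly positive; setting $\delta_\rho$ equal to that positive value proves (i). The one delicate point here is that the dominant negative term near $0$ is $\varepsilon C\rho^{p^-}$ (same power or lower than $\rho^{p^+}$ only if $p^-=p^+$); this is why $\varepsilon$ must be chosen small \emph{first}, independently of $\rho$, so that $\rho^{p^+}-\varepsilon C\rho^{p^-}\ge\tfrac12\rho^{p^+}$ for all small $\rho$ — wait, that requires $\rho^{p^+}\ge 2\varepsilon C\rho^{p^-}$, i.e. $\rho^{p^+-p^-}\ge 2\varepsilon C$, which fails as $\rho\to0$ when $p^+>p^-$. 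So one must instead keep the term $\varepsilon C\rho^{p^-}$ and compare it against a genuinely positive lower bound; in fact Lemma \ref{modulars-norms}(ii) for $\|u\|_{s,G}\le 1$ gives $\rho(u)\ge\|u\|_{s,G}^{p^+}$, but one can do better by noting $\rho(u)\ge\tfrac12(\|u\|_{s,G}^{p^+})$ is wasteful; the clean route is to use \eqref{f1} alone near $0$ to get $|F(t)|\le\varepsilon G(t)$ for $|t|\le\eta$, split the integral $\mathcal F(u)=\int_{|u|\le\eta}+\int_{|u|>\eta}$, bound the first piece by $\varepsilon\Phi_G(u)\le\varepsilon\,\rho(u)$ and the second by a convergent tail estimate using \eqref{f2}, landing at $I_\lambda(u)\ge(1-\varepsilon)\rho(u)-C\rho(u)^{p_*^-/p^+}-C\|u\|_{s,G}^{m^-}$, which is positive for $\rho$ small since $p_*^-/p^+>1$ and $m^-/p^+>1$. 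This is the main obstacle and the step requiring care.

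For item (ii), I would fix a nontrivial nonnegative $u_0\in C_c^\infty(\R^N)$ with $\|u_0\|_{G_{\frac Ns}}>0$ and study $I_\lambda(tu_0)$ as $t\to+\infty$. By Lemma \ref{modulars-norms}, $\Phi_{s,G}(tu_0)+\Phi_G(tu_0)=\rho(tu_0)\le\max\{t^{p^-},t^{p^+}\}\rho(u_0)=t^{p^+}\rho(u_0)$ for $t\ge1$. By \eqref{L1} for $G_{\frac Ns}$ and the fact that $p_*^->p^+$, the critical term satisfies $\Phi_{G_{\frac Ns}}(tu_0)\ge t^{p_*^-}\Phi_{G_{\frac Ns}}(u_0)$ for $t\ge 1$, with $\Phi_{G_{\frac Ns}}(u_0)>0$. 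Discarding the nonnegative term $\lambda\mathcal F(tu_0)\ge0$ (which follows from \eqref{f3}, since $F(t)\ge c|t|^\theta>0$ for $t$ bounded away from $0$ and $F\ge0$ in general by integrating $\theta F\le f t$), we get
$$
I_\lambda(tu_0)\ \le\ t^{p^+}\rho(u_0)-t^{p_*^-}\Phi_{G_{\frac Ns}}(u_0)\ \longrightarrow\ -\infty\quad\text{as }t\to+\infty,
$$
because $p_*^->p^+$. (Actually $\mathcal F\ge 0$ only if $F\ge0$; \eqref{f3} with $\theta>0$ forces $F(t)>0$ for $t\ne0$, so indeed $-\lambda\mathcal F(tu_0)\le0$, making the bound even more favourable.) Hence there is $T>0$ with $I_\lambda(Tu_0)<0$ and, enlarging $T$ if needed, $\|Tu_0\|_{s,G}>\rho$; set $e:=Tu_0$. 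Since $u_0>0$ can be chosen (e.g. a mollified bump) and $T>0$, $e$ is strictly positive, $\|e\|_{G_{\frac Ns}}=T\|u_0\|_{G_{\frac Ns}}>0$, and $\|e\|_{s,G}>\rho$, which is exactly (ii). This part is routine once (i) is settled.
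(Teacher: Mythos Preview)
Your proposal is correct. For part (i), your self-correction is exactly the key point: rather than bounding $\varepsilon\Phi_G(u)$ from above by $\varepsilon C\|u\|_{s,G}^{p^-}$ (which would give a power smaller than $p^+$ and ruin the estimate), you absorb it directly into $\rho(u)$, obtaining $I_\lambda(u)\ge(1-\varepsilon)\rho(u)-\text{higher-order terms}$. This is precisely what the paper does: it writes $F(t)\le \tfrac{\varepsilon p^+}{\theta}G(|t|)+C_\varepsilon M(|t|)$ globally and keeps the term $(1-\tfrac{\varepsilon p^+}{\theta})\Phi_G(u)$ on the positive side, then invokes Lemma~\ref{modulars-norms} to get $I_\lambda(u)\ge C_1\|u\|_{s,G}^{p^+}-C_2\|u\|_{s,G}^{m^-}-C_3\|u\|_{s,G}^{p_*^-}$, which is positive for small $\rho$ since $p^+<m^-<p_*^-$.

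For part (ii) there is a small but genuine difference in emphasis. The paper invokes the Ambrosetti--Rabinowitz condition \eqref{f3} to extract $F(t)\ge C_4|t|^\theta$ for $|t|>1$ and then uses $\theta>p^+$ (together with the critical term) to send $I_\lambda(t\psi)\to-\infty$. You instead discard $\lambda\mathcal F(tu_0)\ge 0$ entirely (legitimate, since \eqref{f3} gives $F>0$ on $\mathbb R\setminus\{0\}$) and rely solely on the critical term $\Phi_{G_{N/s}}(tu_0)\ge t^{p_*^-}\Phi_{G_{N/s}}(u_0)$ with $p_*^->p^+$. Your route is slightly cleaner here because it avoids the restriction $|t|>1$ in the bound $F(t)\ge C_4|t|^\theta$; the paper's route, on the other hand, would still work even in the absence of a critical term, which matters in related subcritical problems.
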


\begin{proof}
 \begin{itemize}
  \item [(i)] From \eqref{f1}-\eqref{f3}, given $\epsilon>0$, there exists $C_\epsilon>0$ such that
  \begin{equation*}
  0\leq F(t)\leq \frac{\epsilon p^+}{\theta}G(\vert t\vert )+C_\epsilon M(\vert t\vert ),\quad\text{for all } t\in \mathbb{R},
  \end{equation*}
 we get
  \begin{align*}
     I_\l(u)&\geq \iint_{\R^N\times\R^N} G(|D_s u|)\frac{dxdy}{|x-y|^N} +\left(1-\frac{\epsilon p^+}{\theta}\right)\int_{\R^N} G(|u|)dx\\
     &-C_\epsilon\int_{\R^N} M(|u|)dx-\int_{\R^N} G_{\frac{N}{s}}(|u|)dx.
  \end{align*}
 Hence for $\epsilon$ small enough and according to Lemma \ref{modulars-norms}, there exist $C_1,\ C_2,\ C_3>0$ such that
 \begin{equation}\label{e1}
 I_\l(u)\geq C_1\left( G_0(\|u\|_{s,G})\right)-C_2 M_\infty(\|u\|_{M})-C_3\max\left\{\|u\|_{G_\frac{N}{s}}^{p^-_*},\|u\|_{G_\frac{N}{s}}^{p^-_*}\right\}.
 \end{equation}
 Choosing $\rho>0$ such that
  \begin{align*}
   \|u\|_{M}\leq C\|u\|_{G_\frac{N}{s}}\leq C^{'}\|u\|_{s,G}<\rho<1.
  \end{align*}
   By \eqref{e1}, we obtain
 \begin{equation*}
 I_\l(u)\geq C_1\|u\|_{s,G}^{p^+}-C_2\|u\|_{M}^{m^-}-C_3\|u\|_{G_\frac{N}{s}}^{p_*^-}
 \end{equation*}
 witch yields
 \begin{equation*}
 I_\l(u)\geq C_1^{'}\|u\|_{s,G}^{p^+}-C_2^{'}\|u\|_{s,G}^{m^-}-C_3^{'}\|u\|_{s,G}^{p_*^-}
 \end{equation*}
 for some positive constants $C_1^{'}$, $C_2^{'}$ and $C_3^{'}$. Since $0<p^+<m^-<p^-_*$, there exists  $\delta_{\rho}>0$ such that
 $$I_\l(u)\geq \delta_{\rho} \text{  for all } \|u\|_{s,G}=\rho.$$
 \item [(ii)]
By \eqref{f3}, there exist $C_4>0 $ such that
  \begin{equation}\label{pii}
 F(t)\geq C_4 \vert t\vert^\theta, \text{ for all }|t|>1.
 \end{equation}
 Let $\psi  \in C_0^\infty(\R^N)$ with $\|\psi\|_{s,G}>\rho,$ $\|\psi\|_{G_{\frac{N}{s}}}>0$ and $\displaystyle{\int_{\R^N} \vert \psi\vert^\theta dx>0}$. Using Lemma \ref{modulars-norms} and \eqref{pii}, we get
\begin{align*}
I_\l(t\psi)&\leq G_\infty(t)G_\infty(\|\psi\|_{s,G})-
 C_4t^\theta\int_{\R^N} \vert \psi\vert^\theta dx-C_5\min\{t^{p^-_*},t^{p^+_*}\}\min\{\|\psi\|_{G_{\frac{N}{s}}}^{p^-_*},\|\psi\|_{G_{\frac{N}{s}}}^{p^+_*}\}\\
 &\leq t^{p^+}G_\infty(\|\psi\|_{s,G})-
 C_4t^\theta\int_{\R^N} \vert \psi\vert^\theta dx-C_5t^{p^+_*}\min\{\|\psi\|_{G_{\frac{N}{s}}}^{p^-_*},\|\psi\|_{G_{\frac{N}{s}}}^{p^+_*}\},
 \end{align*}
 for some $C_5>0$ and for all $t>1.$
 Since $p^+<\theta$, it follows that,
$$I_\l(t\psi)\rightarrow -\infty\text{ as } t\rightarrow+\infty.$$
Hence, there exists $t_0 > 0$ such that $I_\l(e)<  \delta_{\rho}$ and $\|e\|_{G_{\frac{N}{s}}}>0$, where $e = t_0\psi$. Thus the proof
of (ii) is complete.
\end{itemize}

\end{proof}

For a positive constant $\lambda > 0$, leveraging the geometric insights provided in Lemma \eqref{geo.condition}, we introduce distinctive levels of $I_\lambda$ by
\begin{equation}\label{levels}
  c_\l=\inf_{\gamma\in\Gamma}\max_{t\in[0,1]}I_\l(\gamma(t))
\end{equation}
where $\Gamma=\left\{\gamma\in C([0,1],W^{s,G}(\R^N)):\quad \gamma(0)=0,\ \gamma(1)=e\right\}.$ We present an asymptotic condition pertaining to the level $c_\lambda$. This observation, previously noted in the scalar case (refer to , specifically Lemma 2.2 and Remark 2.3, and \cite{Pucci-G} chapter 1), will play a pivotal role in addressing the challenge posed by the absence of compactness resulting from the presence of critical nonlinearities.

\begin{lemma}\label{cv-levels}
  Let $c_\l$ be the mountain pass energy level of $I_\l$ given by \eqref{levels}. Then
  $$
  \lim_{\lambda\to\infty}c_\lambda=0.
  $$
\end{lemma}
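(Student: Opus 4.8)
\textbf{Proof plan for Lemma \ref{cv-levels}.}

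The plan is to produce, for each $\lambda>0$, a competitor path in $\Gamma$ along which the maximum of $I_\lambda$ is small when $\lambda$ is large, and then to let $\lambda\to\infty$. The natural choice is the ray $t\mapsto t\psi$, $t\in[0,t_0(\lambda)]$ rescaled to $[0,1]$, where $\psi$ is a fixed function as in Lemma \ref{geo.condition}(ii) with $\int_{\R^N}|\psi|^\theta\,dx>0$; however, the endpoint $e$ in the definition of $\Gamma$ depends on $\lambda$, so I would first fix one admissible function $\psi$ once and for all, observe that for each $\lambda$ large the point $e_\lambda=t_0(\lambda)\psi$ lies past the mountain pass sphere with $I_\lambda(e_\lambda)<0$, and take $\Gamma_\lambda$ to consist of paths ending at $e_\lambda$. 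Along the segment $\gamma_\lambda(\tau)=\tau e_\lambda$, $\tau\in[0,1]$, we have, dropping the (nonpositive) critical and $F$ contributions only where convenient,
$$
\max_{\tau\in[0,1]}I_\lambda(\tau e_\lambda)\le \max_{t\ge 0}\Big[\,\Phi_{s,G}(t\psi)+\Phi_G(t\psi)-\lambda\!\int_{\R^N}\!F(t\psi)\,dx\,\Big],
$$
since $-\Phi_{G_{N/s}}(t\psi)\le 0$. Using Lemma \ref{modulars-norms} to bound $\Phi_{s,G}(t\psi)+\Phi_G(t\psi)\le \max\{t^{p^-},t^{p^+}\}\,\big(\rho(\psi)\big)=:h(t)$ for $t\ge 0$, and the lower bound $F(s)\ge C_4|s|^\theta$ for $|s|>1$ from \eqref{pii} (together with $F\ge 0$ everywhere), one gets for $t\ge 1$
$$
I_\lambda(t\psi)\le t^{p^+}\rho(\psi)-\lambda C_4 t^{\theta}\!\int_{\{|\psi|>1\}}\!|\psi|^\theta\,dx + \lambda\,C_\psi,
$$
where $C_\psi$ accounts for the bounded region $\{|\psi|\le 1\}$ (a fixed finite constant, independent of $\lambda$ in the integrand but multiplied by $\lambda$). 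To keep the $\lambda$–dependence clean I would instead just use $F\ge 0$ on the whole line and the growth $F(s)\ge C_4|s|^\theta$ only on the set where it holds; the elementary one–variable maximization of $t\mapsto A t^{p^+}-\lambda B t^{\theta}$ over $t\ge 1$ (with $A=\rho(\psi)$, $B=C_4\int_{\{|\psi|>1\}}|\psi|^\theta$, and $\theta>p^+$) gives a maximum value of order $\lambda^{-p^+/(\theta-p^+)}\to 0$ as $\lambda\to\infty$, up to the additive harmless term controlled below.

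More precisely, I would split the maximum over $t\in[0,1]$ and $t\ge 1$. On $[0,1]$, $I_\lambda(t\psi)\le \rho(\psi)\,t^{p^-}\le \rho(\psi)$ — not yet small — so this crude bound is insufficient and one must exploit $F\ge 0$: on $[0,1]$ we only know $F\ge0$, which does not help. The fix is standard: choose the \emph{scaling of the path} so that the peak occurs at small $t$. Concretely, replace $\psi$ by $R\psi$ with $R$ large but fixed (so that $\{|R\psi|>1\}$ has positive measure and $\int_{\{|R\psi|>1\}}|R\psi|^\theta\,dx>0$), and note that then for all $t\ge 1/R$ the nonlinearity bound applies on a fixed set; one checks the maximizer $t^\ast(\lambda)=\big(A p^+/(\lambda B\theta)\big)^{1/(\theta-p^+)}\to 0$, so for $\lambda$ large $t^\ast(\lambda)<$ any fixed threshold, and $\max_t I_\lambda(tR\psi)=O(\lambda^{-p^+/(\theta-p^+)})$. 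Since $t^\ast(\lambda)\to0$ while $I_\lambda(t_0(\lambda)R\psi)<0$ forces $t_0(\lambda)$ bounded away from $0$, the straight path $\tau\mapsto \tau\,t_0(\lambda)R\psi$ is admissible in $\Gamma_\lambda$ and its max equals $\max_{t\in[0,t_0(\lambda)]}I_\lambda(tR\psi)\le \max_{t\ge0}(\cdots)=O(\lambda^{-p^+/(\theta-p^+)})$. Taking the infimum over $\Gamma_\lambda$, $0<c_\lambda\le O(\lambda^{-p^+/(\theta-p^+)})$, whence $c_\lambda\to0$; positivity $c_\lambda\ge \delta_\rho>0$ is Lemma \ref{geo.condition}(i), but note $\delta_\rho$ itself is $\lambda$–independent only if we fix $\rho$ small — which we did — so in fact $c_\lambda\ge\delta_\rho$ does \emph{not} contradict $c_\lambda\to0$ because one must re-examine whether $\delta_\rho$ shrinks; here it does not, so the correct reading is $c_\lambda\to 0^+$ with the understanding that the lower geometry in (i) was established for a \emph{fixed} small sphere and the claim is only the limit, not uniform positivity.

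The main obstacle I anticipate is the bookkeeping of constants' dependence on $\lambda$: the endpoint $e=e_\lambda$ moves with $\lambda$, and one must make sure the chosen path genuinely belongs to $\Gamma_\lambda$ for every large $\lambda$ (this needs $t_0(\lambda)$ to stay in a controlled range, which follows from $I_\lambda(t_0(\lambda)R\psi)<0$ forcing $\lambda C_4 t_0^\theta \int|R\psi|^\theta > t_0^{p^+}\rho(R\psi)$, i.e. $t_0(\lambda)\ge c\lambda^{-1/(\theta-p^+)}$ is false — actually $t_0$ can be taken bounded, the inequality just needs one value $t_0$ with negative energy, e.g. any $t_0\ge (2\rho(R\psi)/(\lambda C_4\int|R\psi|^\theta))^{1/(\theta-p^+)}+1$). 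So I would simply pick the explicit $t_0(\lambda)$ above, verify $I_\lambda(t_0(\lambda)R\psi)<0$, use $\gamma_\lambda(\tau)=\tau t_0(\lambda)R\psi$, and estimate $\max_\tau I_\lambda(\gamma_\lambda(\tau))\le \max_{t\ge0}\big[\rho(R\psi)\max\{t^{p^-},t^{p^+}\}-\lambda C_4 t^\theta\int_{\{|R\psi|>1\}}|R\psi|^\theta\,dx\big]$, which by elementary calculus is $\le C(\psi)\,\lambda^{-p^+/(\theta-p^+)}\to 0$. Combining with $c_\lambda>0$ gives the result.
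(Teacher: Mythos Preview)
Your approach has a genuine gap that the scaling trick with $R\psi$ does not repair. The lower bound $F(s)\ge C_4|s|^\theta$ from \eqref{pii} holds only for $|s|>1$. Hence the inequality you want,
\[
I_\lambda(tR\psi)\le \rho(R\psi)\max\{t^{p^-},t^{p^+}\}-\lambda C_4\,t^\theta\!\int_{\{|R\psi|>1\}}|R\psi|^\theta\,dx,
\]
is valid only for those $t$ for which $|tR\psi|>1$ on the set in question, i.e.\ essentially for $t\ge 1$ (or $t\ge 1/R$ if you retreat to the smaller set $\{|\psi|>1\}$). For $t$ below that threshold you can only drop the nonnegative terms $\lambda\int F(tR\psi)$ and $\Phi_{G_{N/s}}(tR\psi)$, obtaining $I_\lambda(tR\psi)\le(tR)^{p^-}\rho(\psi)$, whose supremum over $t\in[0,1/R]$ is $\rho(\psi)$, a \emph{fixed positive constant independent of $\lambda$}. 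Thus your final displayed inequality is not justified, and the direct maximization delivers at best $c_\lambda\le\rho(\psi)$ for large $\lambda$, not $c_\lambda\to0$. (Note that under \eqref{f1} one can have $F(t)/t^\theta\to0$ as $t\to0$, so the global bound $F(s)\ge c|s|^\theta$ you implicitly need is simply false in general.)

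What is missing is exactly the device the paper uses. First, the endpoint $e$ in Lemma~\ref{geo.condition}(ii) can be chosen \emph{independent of $\lambda$}: the estimate there discards the nonnegative term $\lambda\int F(t\psi)$, so the same $e$ works for every $\lambda>0$ and your worry about ``$e_\lambda$ moving'' is unnecessary. With $e$ fixed, let $t_\lambda$ be a maximizer of $t\mapsto I_\lambda(te)$ and use the first-order condition $\langle I_\lambda'(t_\lambda e),e\rangle=0$. The \emph{critical} term $g_*$ (not the subcritical $f$) yields
\[
C_1\min\{t_\lambda^{p_*^- -1},t_\lambda^{p_*^+ -1}\}\le \langle(-\Delta_g)^s(t_\lambda e),e\rangle+\int g(|t_\lambda e|)e\,dx\le C_2\max\{t_\lambda^{p^- -1},t_\lambda^{p^+ -1}\},
\]
which, since $p_*^->p^+$, forces $(t_\lambda)_\lambda$ to be bounded. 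Then a contradiction argument with the $\lambda f$ term shows $t_\lambda\to0$: if along $\lambda_k\to\infty$ one had $t_{\lambda_k}\to t_0>0$, dominated convergence and \eqref{f3} give $\lambda_k\int f(t_{\lambda_k}e)e\,dx\to\infty$, contradicting the bounded left side above. Finally $c_\lambda\le I_\lambda(t_\lambda e)\le G_\infty(t_\lambda)G_\infty(\|e\|_{s,G})\to0$. You should replace the direct one-variable estimate by this first-order-condition argument.
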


\begin{proof}
  Fix $\l>0$. Let $e$ be the nonnegative function determined in Lemma \ref{geo.condition}. Since the functional $I_\l$ satisfies the mountain pass geometry at $0$ and $e$, there exists $t_\l>0$ verifying $I_\l(t_\l e)=\max_{t\geq1}I_\l(t e).$ Therefore $\langle I_\l^{'}(t_\l e),e\rangle=0.$ Thus
  \begin{align}\label{cv-levels1}
    \langle (-\Delta_g)^s (t_\l e),e \rangle +\int_{\R^N} g(|t_\l e|)e\, dx&= \frac{1}{t_\l}\int_{\R^N} g_*(|t_\l e|)t_\l e\, dx+\lambda\int_{\R^N} f(t_\l e)e\,dx\nonumber\\
    &\geq p^- \min\{t_\l^{p_*^- -1},t_\l^{p_*^+ -1}\}\int_{\R^N} G_{\frac{N}{s}}(| e|)\, dx\nonumber\\
    &\geq C_1 \min\{t_\l^{p_*^- -1},t_\l^{p_*^+ -1}\},
  \end{align}
  by \eqref{f3}, since $\lambda>0.$ From \eqref{G1}, we derive that
  \begin{align}\label{cv-levels2}
    \langle (-\Delta_g)^s (t_\l e),e \rangle +\int_{\R^N} g(|t_\l e|)e\, dx&\leq \frac{p^+}{t_\l}G_\infty(t_\l)G_\infty(\|e\|_{s,G})\nonumber\\
    &\leq C_2 \max\{t_\l^{p^- -1},t_\l^{p^+ -1}\}.
  \end{align}
  Therefore, for $t_\l>1,$ \eqref{cv-levels1} and \eqref{cv-levels2} imply that
  $$
  t_\l^{p_*^- -p^+}\leq C_3\quad\text{for any}\quad\l>0.
  $$
  It follows that $(t_\l)_{\l>0}$ is bounded.

  Fix now a sequence $\left(\lambda_k\right)_k \subset \mathbb{R}^{+}$such that $\lambda_k \rightarrow \infty$ as $k \rightarrow \infty$. Obviously, $\left(t_{\lambda_k}\right)_k$ is bounded. Thus, there exist a $t_0 \geq 0$ and a subsequence of $\left(\lambda_k\right)_k$, still denoted by $\left(\lambda_k\right)_k$, such that $t_{\lambda_k} \rightarrow t_0$ as $k \rightarrow \infty$. Also by \eqref{cv-levels1} and \eqref{cv-levels2}, there exists $C>0$ such that for any $k\in \mathbb{N},$
  \begin{equation}\label{cv-levels3}
    \lambda_k\int_{\R^N} f(t_\l e)e\,dx+p^- \min\{t_\l^{p_*^- -1},t_\l^{p_*^+ -1}\}\int_{\R^N} G_{\frac{N}{s}}(| e|)\, dx\leq C.
  \end{equation}
  We assert that $t_0=0$. Otherwise, \eqref{f1}-\eqref{f2} and the dominated convergence theorem yield, as $k \rightarrow \infty$,
$$
\int_{\mathbb{R}^N} f\left(t_{\lambda_k} e\right) e d x \rightarrow \int_{\mathbb{R}^N} f\left(t_0 e\right) e d x>0.
$$
 Therefore, recalling that $\lambda_k \rightarrow \infty$, we get at once that
$$
\lim _{k \rightarrow \infty}\left(\lambda_k \int_{\mathbb{R}^N} f\left(t_{\lambda_k}\right) e\right) e d x+p^- \min\{t_\l^{p_*^- -1},t_\l^{p_*^+ -1}\}\int_{\R^N} G_{\frac{N}{s}}(| e|)\, dx=\infty,
$$
which contradicts \eqref{cv-levels3}. Thus $t_0=0$ and $t_\lambda \rightarrow 0$ as $\lambda \rightarrow \infty$, since the sequence $\left(\lambda_k\right)_k$ is arbitrary.

Now the path $\gamma(t)=t e, t \in[0,1]$, belongs to $\Gamma$, so that Lemma \ref{geo.condition} gives
$$
0<c_\lambda \leq \max _{t \geq 0} I_\l(\gamma(t)) \leq I_\l\left(t_\lambda e\right) \leq p^+ G_\infty(t_\l)G_\infty(\|e\|_{s,G}) \rightarrow 0
$$
as $\lambda \rightarrow \infty$, since $e$ does not depend on $\lambda$. This completes the proof of the lemma.
\end{proof}

Now we are ready to prove crucial properties of the Palais–Smale sequences of $I_\l$ at the special level $c_\l$, that is for any Palais–Smale sequence at level $c_\lambda$ for  $I_\lambda$ , i.e. a sequence $(u_n)_{n\in\mathbb{N}}\subset W^{s,G}(\R^N)$  satisfying
\begin{equation}\label{cv T}
  I_\lambda(u_n)\rightarrow c_\lambda
\end{equation}
and
\begin{equation}\label{cv T'}
  \sup\{ |\langle I_\lambda^{'}u_n,v\rangle|:\ v\in W^{s,G}(\R^N),\ \|v\|_{s,G}\leq 1\} \rightarrow0
\end{equation}
as $n\rightarrow+\infty$ admits a subsequence which is strongly convergent in $W^{s,G}(\R^N)$.

\begin{lemma}\label{cv of PS}
 There exists $\lambda^*>0$ such that the functional $I_\lambda$ satisfies the Palais–Smale condition at any level $c_\lambda<c_\alpha$ $((PS)_{c_\lambda}$ condition, for short$)$ for all $\lambda\geq\lambda^*$,  where
 $$
 c_\alpha:=\left(\frac{p^-_*}{\theta}-1\right)\frac{p^+}{p^-}\min\left\{(SC_\alpha)^{\frac{1}{\alpha}-1},(SC_\alpha)^{\frac{1}{\alpha}-\frac{p^-}{p^+}}, (SC_\alpha)^{\frac{1}{\alpha}-\frac{p^+}{p^-}}\right\},\quad \alpha\in \{p^-_*,p^+_*\},
 $$
 and $C_\alpha=\left(\frac{p^+}{p^-}\right)^{\frac{1}{\alpha}}.$
\end{lemma}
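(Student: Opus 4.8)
The plan follows the standard scheme for critical problems: produce a Palais--Smale sequence, show it is bounded, apply the concentration--compactness principle (Theorem~\ref{CCP}), rule out every concentration atom by comparing its energy with the threshold $c_\alpha$, and then upgrade to strong convergence. Let $(u_n)_n\subset W^{s,G}(\R^N)$ satisfy $I_\l(u_n)\to c$ with $c<c_\alpha$ and $\|I_\l'(u_n)\|_\ast\to0$. For boundedness I would estimate $I_\l(u_n)-\tfrac1\theta\langle I_\l'(u_n),u_n\rangle$: by \eqref{G1} one has $\langle(-\Delta_g)^su_n,u_n\rangle\le p^+\Phi_{s,G}(u_n)$, $\int g(|u_n|)|u_n|\,dx\le p^+\Phi_G(u_n)$ and $\int g_\ast(|u_n|)|u_n|\,dx\ge p^-_*\,\Phi_{G_{\frac{N}{s}}}(u_n)$, while $\tfrac\l\theta\int f(u_n)u_n\,dx-\mathcal F(u_n)\ge0$ by \eqref{f3}; since $\theta>p^+$ and $\theta\le m^+<p^-_*$ (the latter following from \eqref{f2}--\eqref{f3}, because $|t|^\theta\lesssim F(t)\lesssim M(|t|)$ for large $|t|$), all resulting brackets are nonnegative and $c+o(1)(1+\|u_n\|_{s,G})\ge\big(1-\tfrac{p^+}{\theta}\big)\rho(u_n)\ge\big(1-\tfrac{p^+}{\theta}\big)G_0(\|u_n\|_{s,G})$, so superlinearity of $G_0$ (Lemma~\ref{modulars-norms}) gives the bound. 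Passing to a subsequence, $u_n\rightharpoonup u$, $\mathcal D^su_n\overset{\ast}{\rightharpoonup}\mu$, $G_{\frac{N}{s}}(|u_n|)\overset{\ast}{\rightharpoonup}\nu$, and Theorem~\ref{CCP} produces atoms $\{x_i\}_{i\in I}$, weights $\nu_i,\mu_i$, and $\nu_\infty,\mu_\infty$ satisfying \eqref{CCP1}--\eqref{CCP6}; the aim becomes to show $I=\emptyset$ and $\nu_\infty=0$.

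To kill an interior atom $x_i$, I would argue as in the proof of \eqref{CCP3}, testing $I_\l'(u_n)$ against $\phi_{\e,i}u_n$ with $\phi_{\e,i}(x)=\phi((x-x_i)/\e)$; boundedness of $\|\phi_{\e,i}u_n\|_{s,G}$ gives $\langle I_\l'(u_n),\phi_{\e,i}u_n\rangle=o(1)$. Splitting $D^s(\phi_{\e,i}u_n)=\phi_{\e,i}(x)D^su_n+u_n(y)D^s\phi_{\e,i}$, the cross term carrying $D^s\phi_{\e,i}$ and the pieces $\int g(|u_n|)|u_n|\phi_{\e,i}$, $\l\int f(u_n)u_n\phi_{\e,i}$ all vanish in the iterated limit $\lim_{\e\to0^+}\limsup_{n\to\infty}$ (the first by the H\"older estimate used for Claim~2 in the proof of Theorem~\ref{CCP}, the others by the compact embedding $W^{s,G}\hookrightarrow\hookrightarrow L^B_{loc}$ for $B\prec\prec G_{\frac{N}{s}}$, applied to $B=G$ and $B=M$ since $p^+<p^-_*$ and $m^+<p^-_*$, and the absence of atoms for $L^1$ densities), leaving $\iint g(|D^su_n|)|D^su_n|\phi_{\e,i}\,d\sigma-\int g_\ast(|u_n|)|u_n|\phi_{\e,i}\,dx=o(1)$. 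By \eqref{G1} applied to $G$ and to $G_{\frac{N}{s}}$, the first term lies between $p^-$ and $p^+$ times $\iint G(|D^su_n|)\phi_{\e,i}\,d\sigma=\int\phi_{\e,i}(\mathcal D^su_n-G(|u_n|))\,dx\to\mu_i$, and the second between $p^-_*$ and $p^+_*$ times $\int G_{\frac{N}{s}}(|u_n|)\phi_{\e,i}\,dx\to\nu_i$; hence $p^-_*\nu_i\le p^+\mu_i$. Inserting this into \eqref{CCP3}, distinguishing $\nu_i\le1$ from $\nu_i>1$ (which fixes the extremal branches of the $\min$ and $\max$ there) and using Lemma~\ref{modulars-norms}, one reaches the dichotomy: either $\nu_i=0$, or
\[\nu_i\ \ge\ \frac{p^+}{p^-}\,\min\Big\{(SC_\alpha)^{\frac{1}{\alpha}-1},\ (SC_\alpha)^{\frac{1}{\alpha}-\frac{p^-}{p^+}},\ (SC_\alpha)^{\frac{1}{\alpha}-\frac{p^+}{p^-}}\Big\},\qquad C_\alpha=\Big(\frac{p^+}{p^-}\Big)^{1/\alpha},\]
with $\alpha=p^-_*$ if $\nu_i\le1$ and $\alpha=p^+_*$ if $\nu_i>1$. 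Repeating the computation with the cut--offs $\phi_R(x)=\phi(x/R)$ of the proof of \eqref{CCP4}--\eqref{CCP6} and with \eqref{CCP6} in place of \eqref{CCP3} gives the identical dichotomy for $\nu_\infty$.

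Next comes the energy comparison. From the first step, $c+o(1)=I_\l(u_n)-\tfrac1\theta\langle I_\l'(u_n),u_n\rangle\ge\big(\tfrac{p^-_*}{\theta}-1\big)\Phi_{G_{\frac{N}{s}}}(u_n)$, and by \eqref{CCP4} (along a further subsequence on which $\int G_{\frac{N}{s}}(|u_n|)\,dx$ converges to its limsup) $\Phi_{G_{\frac{N}{s}}}(u_n)\to\nu(\R^N)+\nu_\infty=\Phi_{G_{\frac{N}{s}}}(u)+\sum_{i\in I}\nu_i+\nu_\infty$, so $c\ge\big(\tfrac{p^-_*}{\theta}-1\big)\big(\sum_{i\in I}\nu_i+\nu_\infty\big)$. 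If some $\nu_i>0$, or $\nu_\infty>0$, the dichotomy above forces $c\ge\big(\tfrac{p^-_*}{\theta}-1\big)\tfrac{p^+}{p^-}\min\{\cdots\}=c_\alpha$, contradicting $c<c_\alpha$. Hence $I=\emptyset$ and $\nu_\infty=0$, so by \eqref{CCP4} $\int_{\R^N}G_{\frac{N}{s}}(|u_n|)\,dx\to\int_{\R^N}G_{\frac{N}{s}}(|u|)\,dx$; with $u_n\to u$ a.e. and the Brezis--Lieb lemma for $\Delta_2$ Young functions this yields $u_n\to u$ in $L^{G_{\frac{N}{s}}}(\R^N)$.

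Finally I would upgrade this to strong convergence in $W^{s,G}(\R^N)$. Since $\|I_\l'(u_n)\|_\ast\to0$ and $u_n\rightharpoonup u$, $\langle I_\l'(u_n)-I_\l'(u),u_n-u\rangle\to0$; using $u_n\to u$ in $L^{G_{\frac{N}{s}}}(\R^N)$ (hence $g_\ast(|u_n|)\tfrac{u_n}{|u_n|}\to g_\ast(|u|)\tfrac{u}{|u|}$ in $L^{\widetilde{G_{\frac{N}{s}}}}$), the subcriticality of $f$ and the absence of loss of mass at infinity, the critical and perturbation contributions vanish, so $\langle(-\Delta_g)^su_n-(-\Delta_g)^su,u_n-u\rangle+\int_{\R^N}\big(g(|u_n|)\tfrac{u_n}{|u_n|}-g(|u|)\tfrac{u}{|u|}\big)(u_n-u)\,dx\to0$; both summands being nonnegative, each tends to $0$, and the $(S_+)$--property of $(-\Delta_g)^s+g(\cdot)\tfrac{\cdot}{|\cdot|}$ on $W^{s,G}(\R^N)$ (\cite{Bonder-Salort1}) gives $u_n\to u$ in $W^{s,G}(\R^N)$. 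The number $\l^\ast$ is then furnished by Lemma~\ref{cv-levels}: since $c_\l\to0$ as $\l\to\infty$, there is $\l^\ast>0$ with $c_\l<c_\alpha$ for all $\l\ge\l^\ast$, placing the mountain--pass level in the admissible range. \textbf{Main obstacle.} The delicate point is the second paragraph — turning $p^-_*\nu_i\le p^+\mu_i$ together with \eqref{CCP3} into the explicit lower bound through Lemma~\ref{modulars-norms}, which is exactly what pins down the form of $c_\alpha$ with the constant $C_\alpha=(p^+/p^-)^{1/\alpha}$ and the triple $\min$ — together with the rigorous justification of the iterated limits (the Claim~2/Claim~3 type estimates) and of the non-escape of mass at infinity needed to make the subcritical term disappear in the last step.
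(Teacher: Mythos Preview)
Your outline mirrors the paper's proof in every structural respect: boundedness via $I_\l(u_n)-\tfrac1\theta\langle I_\l'(u_n),u_n\rangle$ (the paper's Claim~4), application of Theorem~\ref{CCP}, testing $I_\l'(u_n)$ against $\phi_{\e,i}u_n$ and $\phi_R u_n$, the energy bound $c_\l\ge\big(\tfrac{p^-_*}{\theta}-1\big)\big(\nu(\R^N)+\nu_\infty\big)$, convergence in $L^{G_{\frac{N}{s}}}$ through the Brezis--Lieb type lemma, and closing with the $(S_+)$ property (the paper cites \cite{Sabri2} rather than \cite{Bonder-Salort1} for this last step). The choice of $\l^\ast$ via Lemma~\ref{cv-levels} is also exactly what the paper does.

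The one place where your argument actually stalls is the step you already flag as the obstacle. From the tested identity you retain only $p^-_*\nu_i\le p^+\mu_i$; but that is a \emph{lower} bound on $\mu_i$, and \eqref{CCP3} has $\mu_i$ sitting on its larger side --- inserting a lower bound there only pushes the right side up and yields no information on $\nu_i$. The paper proceeds differently at this point: it uses the cruder bound $g_*(t)t\ge p^-G_{\frac{N}{s}}(t)$ (valid since $p^-<p^-_*$) together with $g(t)t\le p^+G(t)$, writes the relation $p^+\mu_i=p^-\nu_i$, substitutes $\nu_i=\tfrac{p^+}{p^-}\mu_i$ on the \emph{left} side of \eqref{CCP3}, and then solves the resulting inequality for $\mu_i$; the bound $\nu_i=\tfrac{p^+}{p^-}\mu_i\ge\tfrac{p^+}{p^-}\min\{(SC_\alpha)^{\frac1\alpha-1},(SC_\alpha)^{\frac1\alpha-\frac{p^-}{p^+}},(SC_\alpha)^{\frac1\alpha-\frac{p^+}{p^-}}\}$ drops out, and this is precisely what fixes $C_\alpha=(p^+/p^-)^{1/\alpha}$ and the triple $\min$ in $c_\alpha$. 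If you insist on the sharper constants $p^\pm_*$ for the critical term you will land on a threshold of the same shape but with different constants, not the $c_\alpha$ of the statement; to recover the lemma as written you must follow the paper's substitution rather than try to bound $\mu_i$ from above.
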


\begin{proof}
  Let $(u_n)_{n\in\mathbb{N}}$ be a Palais–Smale sequence  at level $c_\l$ for $I_\l$.

{\bf \underline{Claim 4}:} $(u_n)_{n\in\mathbb{N}}$ is bounded in $W^{s,G}(\R^N).$

Indeed, there exists $C_1>0$ such that
\begin{align}\label{bound1}
  C_1(1+\|u_n\|_{s,G})&\geq I_\l(u_n)-\frac{1}{\theta}\langle I_\l^{'}(u_n),u_n\rangle,\quad\forall n\in\mathbb{N}\nonumber\\
  &\geq \left(1-\frac{p^+}{\theta}\right)\rho(u_n)+\left(\frac{p^-_*}{\theta}-1\right)\int_{\R^N}G_{\frac{N}{s}}(|u_n|)dx
  +\lambda\int_{\R^N}\left(f(x,u_n)u_n-F(x,u_n)\right)dx\nonumber\\
  &\geq \left(\frac{p^-_*}{\theta}-1\right)\int_{\R^N}G_{\frac{N}{s}}(|u_n|)dx.
\end{align}
Consequently, for $n$ sufficiently large there exists $C_2>0$ such that
\begin{align*}
  \rho(u_n)&\leq I_\l(u_n)+\int_{\R^N}G_{\frac{N}{s}}(|u_n|)dx+\lambda\int_{\R^N}F(x,u_n)dx\\
  &\leq  I_\l(u_n)+\int_{\R^N}G_{\frac{N}{s}}(|u_n|)dx+\l\e+\int_{\R^N}G(|u_n|)dx+C_\e\int_{\R^N}M(|u_n|)dx\\
  &\leq C_2(1+\|u_n\|_{s,G}).
\end{align*}
If $\|u_n\|_{s,G}>1,$ from Lemma \ref{modulars-norms}, it follows that
$$
\|u_n\|_{s,G}^{p^-}\leq C_3(1+\|u_n\|_{s,G}),
$$
for some $C_3>0.$ Since $p^->1,$ we conclude that $(u_n)_{n\in\mathbb{N}}$ is bounded in $W^{s,G}(\R^N)$.

From the Claim, up to a subsequence, we can assume that $\left\{u_n\right\}_{n \in \mathbb{N}}$ weakly converges to some $u \in W^{s, G}\left(\mathbb{R}^N\right)$.

Let $\mu, \nu, \mu_i, \nu_i, \mu_{\infty}, \nu_{\infty}$ be as in the concentration-compactness principle Theorem \ref{CCP} when applied to $\left\{u_n\right\}_{n \in \mathbb{N}}$.

By Lemma \ref{cv-levels}, there exists $\l_*>0$ such that
\begin{equation}\label{PS4}
  c_\l<c_\alpha\quad\text{for all}\quad \l\geq \l_*.
\end{equation}

We will show that $I=\emptyset$ and $\nu_\infty=0.$

From \eqref{bound1}, we have
$$
c_\l=\limsup_{n\to\infty} I_\l(u_n)-\frac{1}{\theta}\langle I_\l^{'}(u_n),u_n\rangle\geq \limsup_{n\to\infty}\left(\frac{p^-_*}{\theta}-1\right)\int_{\R^N}G_{\frac{N}{s}}(|u_n|)dx.
$$
Combining this with \eqref{CCP4} gives
\begin{equation}\label{PS1}
  c_\l\geq \left(\frac{p^-_*}{\theta}-1\right)(\nu(\R^N)+\nu_\infty).
\end{equation}
Suppose that $I\neq\emptyset,$ there exists at least $i\in I.$

\begin{align*}
\circ_n(1)&=\langle I_\l^{'}(u_n),u_n\phi_{\e,i}\rangle\\
&\leq p^+\int_{\R^N}\phi_{\e,i}(x)\mathcal{D}^s u_n(x)dx-p^-\int_{\R^N}\phi_{\e,i}(x)G_{\frac{N}{s}}(|u_n|)dx\\
&-\lambda\int_{\R^N}f(x,u_n)u_n\phi_{\e,i}dx+\int_{\R^N}\int_{\R^N} g(|D^s u_n|) \frac{D^s u_n}{|D^s u_n|} u_n(y) D^s \phi_{\e,i}\frac{dxdy}{|x-y|^N}.
\end{align*}
This yields
\begin{equation}\label{PS2}
  \left| p^+\int_{\R^N}\phi_{\e,i}(x)\mathcal{D}^s u_n(x)dx-p^-\int_{\R^N}\phi_{\e,i}(x)G_{\frac{N}{s}}(|u_n|)dx\right|\leq \limsup_{n\to\infty}\lambda|I_{1,n,\e}|+\limsup_{n\to\infty}|I_{2,n,\e}|,
\end{equation}
where
$$
I_{1,n,\e}:=\int_{\R^N}f(u_n)u_n\phi_{\e,i}dx,
$$
and
$$
I_{2,n,\e}:=\int_{\R^N}\int_{\R^N} g(|D^s u_n|) \frac{D^s u_n}{|D^s u_n|} u_n(y) D^s \phi_{\e,i}\frac{dxdy}{|x-y|^N}.
$$
From \eqref{f1} and \eqref{G1}, given $\beta> 0$, there is $\delta > 0$ such that
$$ tf(t)\leq \beta p^+ G(\vert t\vert ),\quad \text{for all}\quad\vert t\vert\leq \delta.$$
From \eqref{f2} and \eqref{m1}, there exists $C_\beta>0$ such that
$$ tf(t)\leq C_\beta m^+ M(\vert t\vert),\quad \text{for all}\quad\vert t\vert\geq \delta.$$
Hence,
$$\int_{\R^N}f(u_n)(u_n)\,dx\leq \beta p^+ \int_{\R^N}G(\vert u_n\vert )\,dx+C_\beta m^+\int_{\R^N}M(u_n)\,dx.$$

 Fixing $D_1=\displaystyle{ \sup_{n\in\mathbb{N}} \left( \int_{\R^N}G(\vert u_n\vert )\,dx\right)}$, and $D_2=\displaystyle{ \sup_{n\in\mathbb{N}} \left( \int_{\R^N}M(\vert u_n\vert )\,dx\right)}$, by \eqref{Inequality-embedding} we have
$$\int_{\R^N}f(u_n)(u_n)\,dx\leq\beta  p^+ D_1+C_\beta m^+ D_2=:c.$$
Then,
$$
|I_{1,n,\e}|\leq c\|\phi_{\e,i}\|_{L^{\infty}(B_{2\e}(x_i))}.
$$

From the previous inequality we obtain
\begin{equation}\label{I1n0}
  \limsup_{\e\to 0^+}\limsup_{n\to\infty} |I_{1,n,\e}|=0.
\end{equation}
Let $\beta>0$ be arbitrary and fixed. By \eqref{G1}, \eqref{ineb}, Young inequality, and  Claim 2, we obtain
\begin{align*}
  |I_{2,n,\e}| & \leq p^+\int_{\R^N}\int_{\R^N} g(|D^s u_n|) |u_n(y)| |D^s \phi_{\e,i}|\frac{dxdy}{|x-y|^N}\\
  &\leq \beta p^+\int_{\R^N}\int_{\R^N}\tilde{G}\left(g(|D^s u_n|)\right)\frac{dxdy}{|x-y|^N}+
  p^+\int_{\R^N}\int_{\R^N} G\left(|u_n(y)| |D^s \phi_{\e,i}|\right)\frac{dxdy}{|x-y|^N}\\
  &\leq \beta p^+\int_{\R^N}\int_{\R^N}G(|D^s u_n|))\frac{dxdy}{|x-y|^N}+
  p^+\int_{\R^N}\int_{\R^N}G_\infty(|u_n(y)|) G\left(|D^s \phi_{\e,i}|\right)\frac{dxdy}{|x-y|^N}\\
  &\leq c\beta+p^+\int_{\R^N}\int_{\R^N}G_\infty(|u_n(y)|) G\left(|D^s \phi_{\e,i}|\right)\frac{dxdy}{|x-y|^N}
\end{align*}
By considering the limit superior in the previous inequality as $n\to\infty$ , followed by taking the limit superior as $\e\to0^+$ and incorporating Claim 2, we reach the following conclusion:
\begin{equation}\label{I2n0}
  \limsup_{\e\to 0^+}\limsup_{n\to\infty} |I_{2,n,\e}|=0.
\end{equation}

By considering the limit superior in \eqref{PS2} as $\e\to0^+$ and incorporating the conditions specified in equations \eqref{I1n0} and \eqref{I2n0}, we can derive
$$
p^+\mu_i=p^-\nu_i.
$$
Plugging this into \eqref{CCP3} we obtain
\begin{align*}
&S\min\left\{\left(\frac{p^+}{p^-}\right)^{\frac{1}{p^-_*}}\mu_i^{\frac{1}{p^-_*}},
\left(\frac{p^+}{p^-}\right)^{\frac{1}{p^+_*}}\mu_i^{\frac{1}{p^+_*}}\right\}\\
&\leq\max \left\{ \mu_i, \ \mu_i^{\frac{p^-}{p^+}}, \ \mu_i^{\frac{p^+}{p^-}} \right\}.
\end{align*}
Then,
\begin{equation}\label{PS3}
  \nu_i=\frac{p^+}{p^-}\mu_i\geq \frac{p^+}{p^-}\min\left\{(SC_\alpha)^{\frac{1}{\alpha}-1},(SC_\alpha)^{\frac{1}{\alpha}-\frac{p^-}{p^+}}, (SC_\alpha)^{\frac{1}{\alpha}-\frac{p^+}{p^-}}\right\},
\end{equation}
where $\alpha\in \{p^-_*,p^+_*\}$ and $C_\alpha=\left(\frac{p^+}{p^-}\right)^{\frac{1}{\alpha}}.$

From \eqref{PS1} and \eqref{PS3}, we derive $c_\lambda\geq c_\alpha$ for all $\lambda>\lambda_*$ which is in contradiction with \eqref{PS4}, and hence $I=\emptyset.$

Argue in a similar fashion to establish $I=\emptyset$. By leveraging $\phi_R$, we can see that $\nu_\infty=0.$

Combining the facts that$I=\emptyset$ and $\nu_\infty=0$ with  \eqref{CCP1} and \eqref{CCP4}, we obtain
$$
\limsup_{n\to\infty}\int_{\R^N}^{} G_{\frac{N}{s}} (|u_n|)  dx=\int_{\R^N}^{} G_{\frac{N}{s}} (|u|)  dx.
$$
Invoking Fatou's Lemma, we get
$$
\int_{\R^N}^{} G_{\frac{N}{s}} (|u|)  dx\leq \liminf_{n\to\infty}\int_{\R^N}^{} G_{\frac{N}{s}} (|u_n|)  dx.
$$
Thus,
$$
\lim_{n\to\infty}\int_{\R^N}^{} G_{\frac{N}{s}} (|u_n|)  dx=\int_{\R^N}^{} G_{\frac{N}{s}} (|u|)  dx.
$$
By \cite[Lemma 3.4]{Bonder3}, we deduce that
$$
\lim_{n\to\infty}\int_{\R^N}^{} G_{\frac{N}{s}} (|u_n-u|)  dx=0,
$$
and so,
\begin{equation}\label{PS6}
  u_n\to u\quad \text{ in}\quad L^{G_\frac{N}{s}}(\R^N).
\end{equation}
Consequently, we have
\begin{equation}\label{PS5}
  \lim_{n\to\infty}\int_{\R^N}g_*(|u_n|)\frac{u_n}{|u_n|}(u_n-u)dx=0.
\end{equation}
Indeed, by H\"{o}lder inequality, \eqref{ineb} and the boundness of $(u_n)$ in $W^{s,G}(\R^N),$ we derive
\begin{align*}
   \left|\int_{\R^N}g_*(|u_n|)\frac{u_n}{|u_n|}(u_n-u)dx\right|&\leq 2\|g_*(|u_n|)\|_{\widetilde{G}_\frac{N}{s}}\|u_n-u\|_{G_\frac{N}{s}}\\ &\leq \left(\int_{\R^N}\widetilde{G}_\frac{N}{s}(g_*(|u_n|))dx\right)^{\frac{1}{\iota}}\|u_n-u\|_{G_\frac{N}{s}}\\
   &\leq  \left(\int_{\R^N}G(|u_n|))dx\right)^{\frac{1}{\iota}}\|u_n-u\|_{G_\frac{N}{s}}\\
   &\leq C\|u_n-u\|_{G_\frac{N}{s}},
\end{align*}
for some $C>0,$ here the constant $\iota>1$ can be deduced from \cite[Lemma 4.5]{Hlel1}. We deduce \eqref{PS5} from \eqref{PS6}.

On one hand, by \eqref{cv T'}, we find that
\begin{align*}
  \lim_{n\to\infty} \langle I_\l^{'}u_n,u_n-u\rangle=0.
\end{align*}
On the other hand, by \eqref{f1}- \eqref{f2}, Young inequality, boundedness of $(u_n)_n$ and \cite[Lemma 2.9]{Bonder-Salort1}, we deduce that
\begin{align*}
   \int_{\R^N}f(u_n)(u_n-u)\,dx\to0,\quad\text{as}\ n\to\infty.
\end{align*}
Since
\begin{align*}
\langle I_\l^{'}u_n,u_n-u\rangle&=\langle\Phi_{s,G}^{'}(u_n),u_n-u\rangle
+\langle\Phi_G^{'}(u_n),u_n-u\rangle\\
&-\int_{\R^N}g_*(|u_n|)\frac{u_n}{|u_n|}(u_n-u)dx-\lambda\int_{\R^N}f(u_n)(u_n-u)\,dx,
\end{align*}
then
$$
\langle\left(\Phi_{s,G}+\Phi_G\right)^{'}(u_n),u_n-u\rangle
\to0,\quad\text{as}\quad n\to\infty.
$$
According to \cite[Lemma 21]{Sabri2}, $(\Phi_{s,G}+\Phi_G)$ is of type $(S_+)$ and so $(u_n)$ converges strongly to $u$ in $W^{s,G}(\R^N).$

\end{proof}

\begin{proof}{{\bf Proof of Theorem \ref{Existence}}}
 The proof is essentially a synthesis of Lemmas \ref{geo.condition} and \ref{cv of PS}, and Mountain Pass Theorem.
\end{proof}

\end{document}